\documentclass[12pt]{amsart}
\usepackage{amssymb}
\usepackage[mathscr]{eucal}
\usepackage{epsf}
\usepackage{epsfig}
\usepackage{color}
\usepackage[notcite, notref]{showkeys}
\DeclareGraphicsExtensions{.pstex,.eps,.epsi,.ps}
\vfuzz2pt 
\newtheorem{thm}{Theorem}[section]
\newtheorem{cor}[thm]{Corollary}
\newtheorem{lem}[thm]{Lemma}

\theoremstyle{definition}

\theoremstyle{remark}

\numberwithin{equation}{section}

\newcommand{\vol}{\textbf{vol}}
\newcommand{\ric}{\textbf{Rc}}

\newcommand{\e}{\epsilon}

\newcommand{\mF}{\mathcal F}
\begin{document}
\title[Almost splitting for a warped product]{An almost splitting theorem for a warped product space}
\author{Paul W.Y. Lee}
\email{wylee@math.cuhk.edu.hk}
\address{Room 216, Lady Shaw Building, The Chinese University of Hong Kong, Shatin, Hong Kong}
\date{\today}

\begin{abstract}
We prove an almost splitting theorem in the sense of \cite{ChCo} for the warped product space with warped function $f(r)=\cosh\left(r\sqrt{\frac{\lambda}{n-2}}\right)$. 
\end{abstract}

\maketitle

\section{Introduction}

The classical splitting theorem of Cheeger-Gromoll \cite{ChGr} states that a Riemannian manifold with non-negative Ricci curvature that contains a line is isometric to a product of the real line with a submanifold. Here a line is a geodesic which is the image of the real line such that each segment is length minimizing between its end points. 

The above theorem is an example of rigidity results in Riemannian geometry. The assumptions of rigidity results involve inequalities and equalities of certain geometric quantities of the manifold and they conclude that the manifold is isometric to certain model space. 

In \cite{ChCo}, a theory of almost rigidity is developed. It is shown that if an inequality on the Ricci curvature holds and the volume or the diameter is approximately equal to that of the model space, then the manifold is close to the model space in the Gromov-Hausdorff topology. In particular, an almost splitting theorem is shown. It states that if the manifold has approximately non-negative curvature, there are two points $q_0$ and $q_1$ which are far enough away from a point $p$, and the excess function $e(x)=d(x,q_0)+d(x,q_1)-d(q_0,q_1)$ is small at $p$, then the ball centered at $p$ of large radius is close in the Gromov-Hausdorff sense to the corresponding ball in the model. Here the model is the product of the real line and a metric space. 

In this paper, we prove a version of the almost splitting theorem for the warped product space with warped function $f$ given by 
\[
f(r)=\cosh\left(r\sqrt{\frac{\lambda}{n-2}}\right). 
\]
The corresponding rigidity result is obtained in \cite{LiWa1,LiWa2, Le}. In order to state the result, let us introduce some notations. Let us fix a positive function $V$ and let $c$ be the cost function defined by 
\begin{equation}\label{cost}
c(x,y)=\inf_{\gamma\in\Gamma}\int_0^TV(\gamma(t))\,dt, 
\end{equation}
where infimum is taken over all time $T>0$ and all Lipschitz curves $\gamma:[0,T]\to M$ which begins at $x$ and ends at $y$ such that $|\dot\gamma(t)|\leq 1$ for Lebesgue almost all $t$ in $[0,T]$. Here $|\cdot|$ denotes the norm defined by the given Riemannian metric. 

Let $g$ be a positive eigen-function of the Laplace-Beltrami operator with eigenvalue $\lambda$. Let $q_0$ and $q_1$ be two points on the manifold $M$, let $e$ be the excess function corresponding to the cost $c$ defined by $e(x)=c(x,q_0)+c(x,q_1)-c(q_0,q_1)$ with $V=g^{\frac{n-1}{n-2}}$. 

Let $p$ be another point in $M$. For $i=0, 1$, let $b_i(x)=c(x,q_i)-c(p,q_i)$. The model is defined by the wraped product $\mathbb{R}\times_fb_1^{-1}(0)$, where $b_1(0)$ is equipped with the distance function induced by that of $M$. Recall that given two points $x$ and $y$ on a warped product space $\mathbb{R}\times_fN$, the distance between $(r_0,x_0)$ and $(r_1,x_1)$ depends only $r_0$, $r_1$, and the distance of $x_0$ and $x_1$ in $N$. Therefore, this defines a distance function on $\mathbb{R}\times_fb_1^{-1}(0)$. 

\begin{thm}\label{main}
Let $p$ be a point in $M$ and let $v>0$ be a constant such that $B_R(p)\geq vR^n$. For each $\e>0$, there are constants $\e_0>0$ and $L>0$ such that if the followings hold 
\begin{enumerate}
	\item the distance from $q_i$ to the ball $B_R(p)$ of radius $R$ centered at $p$ is greater than $LR$, where $i=0,1$, 
	\item $|f(G^{-1}(b_i))^{2-n}-g|<\e$ on $\partial B_R(p)$, where $i=0,1$ and $G(r)=\int_0^rf^{1-n}$, 
	\item the maximum of $g$ is achieved at a point in $B_R(p)$,
	\item the Ricci curvature $\ric$ satisfies $\ric\geq -\frac{\lambda(n-1)}{n-2}-\frac{\e}{R^2}$,
	\item the sectional curvature is bounded by $C\lambda$ for some positive constant $C$,
	\item $e(p)<\e R$. 
\end{enumerate}
Then there is $R_0(K,\lambda,n)>0$ such that the ball $B_{R_0}(p)$ is $k(\e)$-close in the Gromov-Hausdorff distance to the corresponding ball in the model of radius $R$ centered at $(p,0)$, where $k(\e)\to 0$ as $\e\to 0$ and it depends on $n$, $C$, $\e$, $\lambda$, $\max_{x\in M}g(x)$, and $v$ but not on $R$. 
\end{thm}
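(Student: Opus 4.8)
The plan is to transplant the Cheeger--Colding scheme \cite{ChCo} to the cost function $c$. The key observation is that $c$ is the Riemannian distance of the conformal metric $\tilde g=V^{2}g=g^{2(n-1)/(n-2)}g$, so $c$-geodesics are $\tilde g$-geodesics and the excess $e$ measures how close $p$ lies to a minimizing $\tilde g$-segment joining $q_{0}$ to $q_{1}$. For $i=0,1$ put $w_{i}:=f(G^{-1}(b_{i}))^{2-n}$, so that hypothesis (2) reads $|w_{i}-g|<\e$ on $\partial B_{R}(p)$; a direct computation — the infinitesimal form of the rigidity of \cite{LiWa1,LiWa2,Le} — shows that on the model $\mathbb{R}\times_{f}N$ one has $\Delta w_{i}=-\lambda w_{i}$ and $\nabla^{2}w_{i}$ equals a prescribed (position-dependent) multiple of the metric in the directions orthogonal to $\nabla w_{i}$, with a separately prescribed value along $\nabla w_{i}$. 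The first step is then a one-sided \emph{comparison estimate}: integrating the Riccati inequality for the second fundamental form of the level sets $\{c(\cdot,q_{i})=\mathrm{const}\}$ along minimizing $c$-geodesics, and using only the Ricci lower bound (4) and the eigenfunction equation $\Delta g=-\lambda g$, one obtains on $B_{R_{0}}(p)$ an inequality of the form $\Delta w_{i}+\lambda w_{i}\le C_{1}(n,C)\,\e/R^{2}$ in the barrier sense (with the sign dictated by the geometry), equality holding on the model.

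The next step is the Abresch--Gromoll excess argument in this setting. By hypothesis (1) each $b_{i}$ is well defined on $B_{R_{0}}(p)$ and nearly realizes the triangle equality, and combining the comparison estimate with the maximum principle against a radial barrier built from $G$ upgrades $e(p)<\e R$ (hypothesis (6)) to $e\le\psi(\e)R$ on $B_{R_{0}}(p)$; here and below $\psi$ denotes a function, varying from line to line, with $\psi(\e)\to 0$ as $\e\to 0$ and depending only on $n$, $C$, $\lambda$, $\max_{M}g$ and $v$. Since $e=b_{0}+b_{1}$ up to an additive constant and $\Delta b_{0}+\Delta b_{1}=0$ on the model, the volume lower bound $B_{R}(p)\ge vR^{n}$ together with the segment inequality then forces $\Delta w_{i}+\lambda w_{i}$ to be $L^{1}$-small on $B_{R_{0}}(p)$ and $|w_{i}-g|$ to be small there as well (propagating hypothesis (2) inward).

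The core estimate is the Bochner formula. Applying it to $b_{1}$ — more precisely to $G^{-1}(b_{1})$, whose $g$-gradient has length close to $1$ as a consequence of hypothesis (2) — and feeding in the $L^{1}$-smallness of $\Delta w_{1}+\lambda w_{1}$, the Ricci lower bound (4), and this near-unit gradient, and integrating against a cutoff supported in $B_{R_{0}}(p)$, one obtains that $\nabla^{2}w_{1}$ is $L^{2}$-close, with error $\psi(\e)$, to the model Hessian pattern recorded above. One then integrates this infinitesimal warped splitting: the gradient flow of a suitably normalized $\nabla b_{1}$ defines a map $\Psi\colon B_{R_{0}}(p)\to\mathbb{R}\times_{f}b_{1}^{-1}(0)$ with first component $b_{1}$, and the Hessian estimate shows that $\Psi$ distorts the metric by at most $\psi(\e)R$, i.e.\ is a $\psi(\e)R$-Gromov--Hausdorff approximation onto the model ball centered at $(p,0)$. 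Here hypothesis (5), the two-sided bound $|\sec|\le C\lambda$ (used together with (4)), converts the $L^{2}$ control of $\nabla^{2}w_{1}$ into this pointwise metric estimate, through Jacobi-field/Toponogov comparison bounding the spreading of nearby $c$-geodesics; hypotheses (2) and (3) fix the warping profile, (2) matching $w_{i}$ with $g$ on $\partial B_{R}(p)$ and (3) pinning the additive constant in $b_{1}$ so that $b_{1}^{-1}(0)$ is the waist where $\cosh$ attains its minimum — precisely what excludes the other solutions $A\cosh+B\sinh$ of the profile equation. An exhaustion argument then produces $R_{0}$ depending only on $\lambda$, $n$ and the curvature constant, and the function $k(\e)$ with the stated dependence.

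The step I expect to be the main obstacle is this last one: converting the integral Hessian estimate into genuine Gromov--Hausdorff closeness when the limit object is a warped product rather than a metric product, and when the natural operator is the one attached to $c$ (equivalently to the weight $V=g^{(n-1)/(n-2)}$) rather than the Riemannian distance. One must show the gradient flow is well defined and bi-Lipschitz on a definite ball, that it nearly intertwines $g$ with $dr^{2}+f(r)^{2}g_{b_{1}^{-1}(0)}$ — so that the error genuinely has the warped and not merely a split form — and that all constants scale correctly in $R$, the normalizations $\e/R^{2}$ in (4) and $\e R$ in (6) being dictated by exactly this scaling. A secondary difficulty is proving the comparison estimate with the sharp model constant, since the weight and the warping must interact precisely for the model to be the equality case; the computations of \cite{LiWa1,LiWa2,Le} provide the template there.
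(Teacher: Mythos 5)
Your plan correctly identifies the Cheeger--Colding template that the paper follows (Laplacian comparison via Riccati, Abresch--Gromoll excess estimate, Bochner/Hessian estimates, segment inequality, metric approximation), and the observation that $c$ is the distance of the conformal metric $V^{2}g$ is the right starting point. However, there is a genuine gap in the core step. You propose to apply the Bochner formula to $G^{-1}(b_{1})$ (equivalently to $w_{1}=f(G^{-1}(b_{1}))^{2-n}$) and extract $L^{2}$ Hessian control by integrating against a cutoff. But $b_{1}$ is only Lipschitz and locally semi-concave; its distributional Laplacian is a measure with a singular (concentrated) part on the cut locus, and the pointwise Bochner identity simply is not available for it. The paper, following \cite{ChCo}, resolves this by introducing the \emph{harmonic replacement} $\bar b_{i}$ -- the harmonic function equal to $b_{i}$ on $\partial B_{1}(p)$ -- first proving in Theorem~\ref{harApprox} that $|b_{i}-\bar b_{i}|$ is small and $\int_{B_{1}(p)}|\nabla(b_{i}-\bar b_{i})|^{2}$ is small, and only then running Bochner on the smooth function $\mathcal F_{i}=F(G^{-1}(\bar b_{i}))$ to obtain $\int_{B_{1/2}(p)}|\nabla^{2}\mathcal F_{i}-f'(G^{-1}(\bar b_{i}))I|^{2}\le k(\e)$ (Theorem~\ref{HessianEstI}, Corollary~\ref{HessianEstII}). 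Without this replacement (or another regularization), your Bochner step does not start: the ``$L^{1}$-smallness of $\Delta w_{1}+\lambda w_{1}$'' you invoke bounds only the absolutely continuous part of a measure and says nothing about the singular part, which is precisely what obstructs the pointwise computation. Relatedly, Theorem~\ref{uEst} (the sup estimate on $u=f(G^{-1}(\bar b_{i}))^{2-n}-g$ via Moser iteration) is what propagates hypothesis~(2) inward; this is more delicate than ``maximum principle against a radial barrier'' since $u$ satisfies $\Delta u = F_{1}+F_{2}u$ with $F_{1}$ only $L^{1}$-small.

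A secondary divergence concerns the endgame, which you yourself flag as the main obstacle. You propose to build the GH approximation directly as a gradient-flow map $\Psi$ into the warped model and then show it is nearly isometric. The paper does not attempt this. It first proves a distance estimate (Theorem~\ref{disEst}): for comparable triples of points, $|d(y,z)-d_{m}(y_{m},z_{m})|<k(\e)$. This uses the segment inequality (Lemma~\ref{integral}) and Liouville-type flow estimates (Lemmas~\ref{flowintegral}, \ref{flowcintegral}) to upgrade $L^{2}$ Hessian control to control along generic minimizers, and then a Gronwall/ODE comparison against solutions of $\bar U''=\frac{\lambda}{n-2}\bar U$ to convert segment control into distance control; hypothesis~(5) on the sectional curvature enters exactly here, bounding the divergence of nearby geodesic flows. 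The GH approximation is then assembled combinatorially from a dense net (Section~\ref{PO}), matched to the model via Theorem~\ref{disEst} and the third assertion of Theorem~\ref{harApprox}. Your appeal to ``Jacobi-field/Toponogov comparison'' gestures in this direction but underestimates the amount of machinery the segment inequality and the ODE comparison are carrying; making the gradient-flow map literally work would require essentially the same lemmas plus additional regularity of the flow that $b_{1}$ does not possess.
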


The proof of Theorem \ref{main} relies heavily on the ideas from \cite{ChCo}. In section \ref{ET}, we prove an Abresch-Gromoll type inequality using the cost function (\ref{cost}). This is motivated by the work in \cite{Le}. In section \ref{HA}, similar to the almost splitting theorem in \cite{ChCo}, we develop several estimates on the harmonic approximations of $b_i$. The Hessian estimates, in our case, are more complicated due to the involvement of the eigen-function. Using the estimates established in section \ref{harApprox}, we show that the distance function of the Riemannian manifold is close to that of the model. Finally, we finish the proof in section \ref{PO}.

\section*{Acknowledgements}
I would like to thank those who supported me throughout my career.

\section*{Notations}
Thoughout this paper, there are different constants depending on $K$, $\e_0$, $n$, $\lambda$, and $v$. These dependencies will be suppressed throughout the paper. The symbols $k_i(\e)$ and $c_i(\e)$ denote continuous family of constants such that $k_i(\e),c_i(\e)\to 0$ as $\e\to 0$. 

\smallskip

\section{Eikonal Type Equations and Mechanical Hamiltonian Systems}\label{ET}

In this section, we discuss various facts concerning the cost function (\ref{cost}) which are needed for this paper. First, the cost function is a viscosity solution of the following eikonal type equation 
\begin{equation}\label{eikonal}
|\nabla f|_x-V(x)=0. 
\end{equation}
where $|\cdot|$ and $V$ denote, respectively, a Riemannian metric and a positive potential function of a manifold $M$. 

In the Euclidean case, this is shown in \cite{Lions}. One can also consider the above equation as a usual eikonal equation with Riemannian metric given by $\frac{1}{V}|\cdot|$. Therefore, $c$ is also given by the distance function corresponding to this Riemannian metric. However, for later discussion, it is more convenient and natural to think of the cost $c$ as the above optimal control problem. 

An application of the Pontryagin maximum principle (see, for instance, \cite{Ju}) gives 

\begin{thm}\label{Ham}
Let $(T,\gamma)$ be a minimizer of the above minimization problem. Then there is a path $(\gamma(t),p(t))$ in the cotangent bundle $T^*M$ of the manifold $M$ which is a solution to the Hamiltonian system of the Hamiltonian
\begin{equation}
H(x,p)=|p|_x-V(x).
\end{equation}
In particular, $|\dot\gamma(t)|=1$ and 
\begin{equation}\label{minEqn}
\frac{D^2}{dt^2}\gamma=\nabla \log V(\gamma)-\left<\nabla \log V,\dot\gamma\right>\dot\gamma. 	
\end{equation}
Here $\frac{D}{dt}$ denotes the covariant derivative with respect to the given Riemannian metric $|\cdot|$. 
\end{thm}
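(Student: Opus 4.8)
The plan is to verify Theorem \ref{Ham} as a direct application of the Pontryagin maximum principle (PMP) to the optimal control problem defining the cost \eqref{cost}, followed by an explicit computation unwinding the resulting Hamiltonian equations. First I would recast \eqref{cost} in standard optimal-control form: the state is $x(t)\in M$, the control is the velocity $u(t)=\dot\gamma(t)$ constrained to the closed unit ball $|u|_x\le 1$, the running cost is $V(x)$, and the terminal time $T$ is free with fixed endpoints $x(0)=x$, $x(T)=y$. Applying PMP (in the form suitable for manifolds, e.g.\ \cite{Ju}) to a minimizer $(T,\gamma)$ produces a lift $(\gamma(t),p(t))\in T^*M$ and a constant $p_0\le 0$, not both zero, such that $\gamma,p$ satisfy Hamilton's equations for the control-dependent Hamiltonian $\widetilde H(x,p,u)=\langle p,u\rangle + p_0 V(x)$ and such that $u(t)$ maximizes $\widetilde H$ over the unit ball at each time.

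Next I would carry out the maximization over the control: for fixed $(x,p)$, the linear functional $u\mapsto\langle p,u\rangle$ on the unit ball is maximized at $u=p^\sharp/|p|_x$ with maximum value $|p|_x$, provided $p\ne 0$. The abnormal case $p_0=0$ forces $p\equiv 0$ along the trajectory, contradicting nontriviality, so we are in the normal case and may normalize $p_0=-1$ (here one uses that $V>0$, which prevents degeneracies; the scaling freedom in PMP lets us fix $p_0=-1$). Substituting the optimal control back gives the maximized Hamiltonian $H(x,p)=|p|_x-V(x)$ as in the statement, and the optimal trajectory satisfies $\dot\gamma=\nabla_p H=p^\sharp/|p|_x$, which in particular has $|\dot\gamma|=1$; this also shows $|p|_x$ is the correct dual norm and that $\gamma$ is parametrized by arclength.

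Then I would derive the second-order ODE \eqref{minEqn}. From $\dot\gamma=p^\sharp/|p|$ one gets $p^\sharp=|p|\,\dot\gamma$; differentiate covariantly and use the $\dot p$ equation $\dot p=-\nabla_x H$, which in metric terms reads $\frac{D}{dt}p^\sharp=\nabla V(\gamma)\cdot(\text{something})$ — more precisely $\nabla_x H = -\nabla V$ contributes the gradient term while the $|p|_x$ part contributes a term proportional to the Christoffel data that gets absorbed into the covariant derivative. Writing $|p|=V(\gamma)$ along the trajectory (from $H$ being autonomous and, by the free-time condition, $H\equiv 0$ on the minimizer, so $|p|=V$), one has $p^\sharp = V(\gamma)\dot\gamma$. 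Differentiating: $\frac{D}{dt}(V(\gamma)\dot\gamma) = \langle\nabla V,\dot\gamma\rangle\dot\gamma + V\frac{D^2}{dt^2}\gamma$, and this must equal $\nabla V(\gamma)$ (the covariant form of $\dot p = -\nabla_x H$ after raising indices and accounting for the $|p|_x$ dependence on the metric). Solving for $\frac{D^2}{dt^2}\gamma$ and dividing by $V$ yields $\frac{D^2}{dt^2}\gamma = \frac{\nabla V}{V} - \langle\frac{\nabla V}{V},\dot\gamma\rangle\dot\gamma = \nabla\log V(\gamma) - \langle\nabla\log V,\dot\gamma\rangle\dot\gamma$, which is \eqref{minEqn}.

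The main obstacle I anticipate is the careful handling of the nonsmoothness of $H(x,p)=|p|_x-V(x)$ at $p=0$ and the rigorous passage between the abstract PMP statement and the covariant ODE on the manifold: one must justify that minimizers never have $p=0$ (using positivity of $V$ and the free terminal time, which pins $H$ to zero), that the control is genuinely given by the smooth formula $p^\sharp/|p|$ off this bad set, and that the index gymnastics converting $\dot p = -\partial_x H$ into a covariant second-order equation is done correctly — in particular that the metric-dependence hidden in $|p|_x$ produces exactly the Christoffel terms needed to reassemble $\frac{D^2}{dt^2}\gamma$. A clean way to sidestep some of this is to invoke the observation already noted in the text that $c$ is the Riemannian distance for the conformally rescaled metric $\tilde g = V^{-2}g$, so \eqref{minEqn} is just the geodesic equation for $\tilde g$ rewritten in terms of $g$ via the standard conformal-change formula for the Levi-Civita connection; I would present the PMP derivation as the main argument and mention this conformal cross-check as a remark.
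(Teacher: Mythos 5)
Your proposal is correct and matches the paper's approach: the paper gives no proof and merely states that Theorem~\ref{Ham} follows from ``an application of the Pontryagin maximum principle (see, for instance, \cite{Ju}),'' which is exactly the argument you carry out in full. The details you supply are sound — in particular the exclusion of the abnormal case via the free-time condition $H\equiv 0$ together with $V>0$ (which also keeps $p$ away from the singularity of $|p|_x$), the identification $p^\sharp = V(\gamma)\dot\gamma$, and the observation that the Christoffel terms coming from the $x$-dependence of $|p|_x$ are precisely those that assemble $\dot p$ into $\frac{D}{dt}p^\sharp = \nabla V$, from which \eqref{minEqn} follows by the product rule and division by $V$; the conformal-metric cross-check you mention is a valid independent confirmation.
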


For convenience, we consider the following map $\Psi_t$ defined by 
\[
\begin{split}
&\Psi_0(v)=x, \quad \frac{d}{dt}\Psi_t(v)\Big|_{t=0}=v,\\
	&\frac{D^2}{dt^2}\Psi_t(v)=\frac{1}{2}\nabla V^2
	(\Psi_t(v)), 
	\end{split}
\]
where $|v|_x=V(x)$. Then minimizers of (\ref{cost}) are of form $\Psi_{r^{-1}(t)}(v)$, where $r(t)=\int_0^tV(\Psi_s(v))ds$. 

The following facts can be obtained using arguments similar to the Riemannian case. 

\begin{lem}\label{costProp}
Assume that $s\in [0,T]\mapsto \gamma(s):=\Psi_{r^{-1}(s)}(v)$ is a minimizer between its end-points. Then
\begin{enumerate}
	\item $\gamma|_{[0,t]}$ is the unique minimizer connecting its end-points $\gamma(0)$ and $\gamma(t)$ for each $t<T$,
	\item there is a neighborhood $U$ of $x$ such that $c_x$ is smooth on $U-\{x\}$, 
	\item $c_x$ is smooth at $\gamma(s)$ for each $s$ in $[0,T)$,
	\item $d(\Psi_{r^{-1}(s)})$ is invertible for each $s$ in $[0,T)$. 
\end{enumerate}
\end{lem}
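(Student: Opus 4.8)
The plan is to recognize the cost $c$ as a genuine Riemannian distance and then transcribe the classical structure theory of distance functions and cut loci. First I would note that, since $V$ is smooth and strictly positive on $\{g>0\}$ (here $V=g^{(n-1)/(n-2)}$), the conformally rescaled metric $\tilde g$ whose norm is $V\,|\cdot|$ is a bona fide smooth Riemannian metric; reparametrizing any admissible competitor to unit $|\cdot|$-speed shows immediately that $c(x,y)=d_{\tilde g}(x,y)$. Next I would use Theorem~\ref{Ham} to identify minimizers of $c$, after the reparametrization $s=r(t)$, with the $\tilde g$-geodesics: the conservation law $H=|p|_x-V(x)\equiv 0$ says exactly that such a curve has $|\dot\gamma|_x\equiv 1$, and on the hypersurface $\{|p|_x=V(x)\}$ — which avoids the zero section, so that $H$ is smooth there — the Hamiltonian vector field of $H$ is a pointwise multiple of that of $\frac12|p|^2_{\tilde g}$. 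Hence the flow $\Psi$ (after the reparametrization by $r$) is, up to reparametrization, the $\tilde g$-exponential map, and each of the four assertions is the translation of a standard Riemannian fact.

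For (1): if $s\in[0,T]\mapsto\gamma(s)$ is a $\tilde g$-minimizer, its $\tilde g$-cut time from $\gamma(0)$ is $\ge T$, so for every $t<T$ the point $\gamma(t)$ lies strictly before the cut time and $\gamma|_{[0,t]}$ is therefore the unique $\tilde g$-minimizer from $\gamma(0)$ to $\gamma(t)$; equivalently, a second minimizer $\sigma\ne\gamma|_{[0,t]}$ could be concatenated with $\gamma|_{[t,T]}$ to produce a $c$-minimizer that is not smooth at $\gamma(t)$, contradicting the fact, established as in Theorem~\ref{Ham}, that every minimizer solves the smooth system~(\ref{minEqn}). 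For (4): a $\tilde g$-minimizer has no point conjugate to $\gamma(0)$ in the open parameter interval, which by the Jacobi equation is precisely the nondegeneracy of the differential of the $\tilde g$-exponential map along $\gamma|_{(0,T)}$; since $r$ is a smooth increasing diffeomorphism of the parameter (because $V>0$), this is the same as invertibility of $d(\Psi_{r^{-1}(s)})$ for $s<T$.

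For (2) and (3): by (4) and the inverse function theorem the flow map is a local diffeomorphism near each relevant $(x,v)$ for small $s$, and combined with positivity of the $\tilde g$-injectivity radius at $x$ it restricts to a diffeomorphism from a punctured ball of initial data onto a punctured neighborhood $U$ of $x$; on $U-\{x\}$ the function $c_x:=c(x,\cdot)$ coincides with the smooth radial coordinate of this chart, which gives (2). Running the same localization near $\gamma(s)$ and using that, for $s<T$, the point $\gamma(s)$ is neither conjugate to $x$ nor in the cut locus of $x$, one sees that near $\gamma(s)$ the function $c_x$ agrees with a smooth solution of the eikonal equation~(\ref{eikonal}), so it is smooth at $\gamma(s)$, which is (3). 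I expect the only real work to be technical bookkeeping: verifying that $\Psi$ and its differential inherit from $H$ — smooth only away from the zero section — exactly the smoothness and Jacobi-field description that the Riemannian exponential map enjoys, together with keeping track of the reparametrization $r$; all of this proceeds along the lines of the Riemannian proofs once one stays on the level set $\{|p|_x=V(x)\}$ and uses that $V$ is smooth and positive.
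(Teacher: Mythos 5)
Your proposal is correct, and it follows exactly the route the paper intends: the paper itself notes that $c$ is the distance function of a conformally rescaled Riemannian metric and then states Lemma~\ref{costProp} without proof, saying only that the facts ``can be obtained using arguments similar to the Riemannian case.'' You have simply made that sketch precise by identifying $c$ with $d_{\tilde g}$, translating minimizers into $\tilde g$-geodesics via the conserved level set $\{|p|_x=V(x)\}$, and invoking the standard cut-locus/conjugate-point theory, which is the argument the paper implicitly has in mind.
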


Next, we state a Laplacian comparison type theorem for the cost function $c$. 

\begin{lem}\label{griccati}
Assume that the Ricci curvature $\ric$ of the manifold is bounded below by a constant $-\frac{\lambda (n-1)}{n-2}-\e$ and the function $g$ satisfies $\Delta g\leq -\lambda g$ and $g\leq K$. Suppose that $V=g^{\frac{n-1}{n-2}}$. Then the Laplacian of $c_x$ satisfies 
\[
	\Delta c_x(\Psi_t(v))\leq \sqrt{\e(n-1)}K^{\frac{n-1}{n-2}}\coth\left(\sqrt{\frac{\e}{n-1}}K^{\frac{n-3}{n-2}}u^{-1}(t)\right),
\]
where $u^{-1}(t)=\int_0^tg(\Psi_s(v))^{\frac{2}{n-2}}ds$. 
\end{lem}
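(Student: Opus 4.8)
The plan is to reduce the statement to a one-dimensional Riccati comparison along the curve $t\mapsto\Psi_t(v)$, exactly as in the Riemannian Laplacian comparison theorem, but with the eigenfunction $g$ entering as a conformal-type weight. First I would set up the geometry of the level sets of $c_x$. By Lemma \ref{costProp}, for $t$ in the relevant range $c_x$ is smooth near $\Psi_t(v)$, its level sets are smooth hypersurfaces, and $\nabla c_x$ is the velocity field of the (reparametrized) minimizers emanating from $x$; write $N=\nabla c_x/|\nabla c_x|$ for the unit normal and recall $|\nabla c_x|=V=g^{\frac{n-1}{n-2}}$ from the eikonal equation \eqref{eikonal}. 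Let $\bH$ denote the mean curvature of the level set with respect to $N$. A direct computation, differentiating the relation $\Delta c_x=\di(V N)$, gives
\[
\Delta c_x = V\,\bH + \langle \nabla V, N\rangle,
\]
so the problem is to control the evolution of $V\bH+\langle\nabla V,N\rangle$ along the flow of $N$. I would therefore compute $\tfrac{d}{dt}$ of this quantity along $\Psi_t(v)$, using the Riccati equation for $\bH$ on the geodesic-like congruence generated by the control flow together with equation \eqref{minEqn} for the second covariant derivative of the minimizer.

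Next I would carry out the Riccati step. Along each minimizer the second fundamental form $S$ of the level sets satisfies an equation of the form $S' + S^2 + R_N = (\text{lower-order terms involving } \nabla\log V)$, where $R_N$ is the relevant curvature endomorphism; tracing and applying Cauchy–Schwarz ($\tr(S^2)\ge \bH^2/(n-1)$) together with the hypothesis $\ric\ge -\frac{\lambda(n-1)}{n-2}-\e$ produces a scalar differential inequality for $\bH$. Here the key computational trick is the change of the parameter: introduce $u$ by $u'(t)=g(\Psi_t(v))^{\frac{2}{n-2}}$ — this is precisely the substitution appearing in the statement — which absorbs the conformal factor $g^{\frac{n-1}{n-2}}$ and the first-order terms $\langle\nabla\log V,\cdot\rangle$ from \eqref{minEqn} and from $\Delta g\le-\lambda g$, turning the inequality for $\phi:=\Delta c_x$ into a clean Riccati inequality in the variable $u^{-1}(t)$ of the form $\frac{d\phi}{du}+\frac{\phi^2}{(n-1)K^{2\frac{n-1}{n-2}}} \le \frac{\e}{n-1}$ (up to harmless constants). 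I would verify that the eigenfunction equation $\Delta g\le-\lambda g$ contributes a term of the right sign so that the $\lambda$-dependence cancels and only the error $\e$ survives on the right-hand side; the bound $g\le K$ is used to replace the varying factor $g^{\frac{n-1}{n-2}}$ by the constant $K^{\frac{n-1}{n-2}}$ in the coefficient of $\phi^2$, which is where the precise exponents $\frac{n-1}{n-2}$ and $\frac{n-3}{n-2}$ come from.

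Finally I would integrate the scalar Riccati inequality with the blow-up initial condition $\phi\to+\infty$ as $u^{-1}(t)\to 0^+$ (the standard asymptotics of the mean curvature of small distance spheres, which holds here because $d(\Psi_t(v))$ is invertible for small $t$ by Lemma \ref{costProp}(4), so $\Psi_t$ really is a diffeomorphism onto a neighborhood minus the point). The maximal solution of $y' + ay^2 = b$ with $y(0^+)=+\infty$ is $y(s)=\sqrt{b/a}\,\coth(\sqrt{ab}\,s)$, and substituting $a=\big(n-1)^{-1}K^{-2\frac{n-1}{n-2}}$, $b=\e/(n-1)$ gives exactly $\sqrt{\e(n-1)}\,K^{\frac{n-1}{n-2}}\coth\!\big(\sqrt{\tfrac{\e}{n-1}}\,K^{\frac{n-3}{n-2}}u^{-1}(t)\big)$, as claimed. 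A comparison argument (the computed $\phi$ lies below this maximal solution since it has the same singular initial data and satisfies the differential inequality) finishes the proof.

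I expect the main obstacle to be the bookkeeping in the second step: correctly tracking all the first-order terms $\langle\nabla\log V,\dot\gamma\rangle$ coming from the non-geodesic equation \eqref{minEqn}, the extra term $\langle\nabla V,N\rangle$ in $\Delta c_x$, and the contribution of $\Delta g\le -\lambda g$, and checking that after the reparametrization $u$ they combine so that the curvature lower bound degrades by exactly $\e$ and not by something that still involves $\lambda$. Getting the constants in the $\coth$ to match requires the substitution and the estimate $g\le K$ to be applied in precisely the right places.
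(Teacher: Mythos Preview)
Your strategy is the same as the paper's: derive a scalar Riccati inequality for (essentially) $\Delta c_x$ along the curve $\Psi_t(v)$, reparametrize by $u$, and compare with the $\coth$ solution blowing up at the origin. The paper phrases the setup via the Jacobi matrix $B(t)=d\Psi_t$ and sets $s(t)=\tr(B^{-1}\dot B+A)$ rather than via level-set mean curvature, but that is only a difference in language; the resulting first Riccati inequality
\[
\dot s+\frac{s^2}{n-1}-\frac{2s}{n-2}\,\frac{\langle\nabla g,\dot\Psi\rangle}{g}-\e\,g^{\frac{2n-2}{n-2}}\le 0
\]
is exactly what your mean-curvature computation would produce.

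There is one concrete gap in your plan. The reparametrization $d\sigma=g^{2/(n-2)}\,dt$ by itself does \emph{not} kill the first-order term $\frac{2s}{n-2}\langle\nabla\log g,\dot\Psi\rangle$; if you keep $\phi=\Delta c_x=s$ and only change the independent variable, that cross term survives and your claimed inequality $\frac{d\phi}{du}+\frac{\phi^2}{(n-1)K^{2(n-1)/(n-2)}}\le \e/(n-1)$ is not what comes out (indeed, using $g\le K$ to put $K$ in the \emph{denominator} of the $\phi^2$ coefficient would weaken, not strengthen, the inequality, and your stated $a,b$ do not reproduce the constants in the lemma). The paper's extra step is to also rescale the \emph{dependent} variable, setting $w=g^{-2/(n-2)}s$. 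A one-line check shows that the product rule then exactly cancels the cross term, leaving
\[
\frac{d}{d\sigma}\,w+\frac{w^2}{n-1}\le \e\,g^{\frac{2(n-3)}{n-2}}\le \e\,K^{\frac{2(n-3)}{n-2}},
\]
where now $g\le K$ is used only on the right-hand side, in the correct direction. Comparing with the $\coth$ solution gives the bound for $w$, and a second application of $g\le K$ in $s=g^{2/(n-2)}w\le K^{2/(n-2)}w$ yields the stated estimate for $\Delta c_x$. Once you insert this rescaling, your outline goes through verbatim and matches the paper's proof.
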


We also need the following volume growth estimate.

\begin{lem}\label{volEst}
Under the assumptions of Lemma \ref{griccati} and that $\Psi_{r^{-1}(s)}$ is contained in $B_1(p)$ for each $s$ in $[\frac{t}{2},t]$, there is a constant \\ $C(\e,K,\mathcal D,n)>0$ such that 
\[
	\frac{\det(d(\Psi_{r^{-1}(t)})_v)}{\det(d(\Psi_{r^{-1}(s)})_v)}\leq C(\e,K,\mathcal D,n,\lambda)
\]
for each $\frac{t}{2}<s<t$, where $\mathcal D=\sup_{x,y\in B_1(p)}\tau(x,y)$. 
\end{lem}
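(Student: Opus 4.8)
The plan is to derive the volume growth estimate from the Laplacian comparison of Lemma~\ref{griccati} by integrating the Riccati-type inequality along the curve $s\mapsto\Psi_{r^{-1}(s)}(v)$. Recall that, as in the Riemannian case, the logarithmic derivative of the Jacobian determinant $\det(d(\Psi_{r^{-1}(s)})_v)$ along the flow is governed by $\Delta c_x$ at the corresponding point; more precisely, writing $\J(s)=\det(d(\Psi_{r^{-1}(s)})_v)$, one has a first-order ODE of the form $\frac{d}{ds}\log\J(s)=\big(\text{trace of a shape-operator-type object}\big)$, which the eikonal structure identifies (up to the bounded conformal factor coming from $V=g^{\frac{n-1}{n-2}}$) with $\Delta c_x$ evaluated along $\Psi$. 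The point is that Lemma~\ref{costProp}(4) guarantees $d(\Psi_{r^{-1}(s)})_v$ is invertible on $[0,T)$, so $\log\J$ is a well-defined smooth function there and this ODE makes sense.

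Given that, the first step is to integrate: for $\frac t2<s<t$,
\[
\log\frac{\J(t)}{\J(s)}=\int_s^t\frac{d}{d\sigma}\log\J(\sigma)\,d\sigma\le \int_s^t \Delta c_x\big(\Psi_{r^{-1}(\sigma)}(v)\big)\,\phi(\sigma)\,d\sigma,
\]
where $\phi$ is the (bounded above and below, since $0<g\le K$) conversion factor between the control-problem arclength parameter and the parameter in which the determinant is differentiated. The second step is to plug in the bound from Lemma~\ref{griccati}:
\[
\Delta c_x\big(\Psi_{r^{-1}(\sigma)}(v)\big)\le \sqrt{\e(n-1)}\,K^{\frac{n-1}{n-2}}\coth\!\left(\sqrt{\tfrac{\e}{n-1}}\,K^{\frac{n-3}{n-2}}u^{-1}(\sigma)\right),
\]
and to control the resulting integral. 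The $\coth$ is singular only as $u^{-1}(\sigma)\to 0^+$, i.e.\ near the initial point of the curve; but the hypothesis $\frac t2<s$ keeps us uniformly away from $\sigma=0$. Using that $g\ge$ (a positive constant on $B_1(p)$) — which is where the diameter quantity $\mathcal D=\sup_{x,y\in B_1(p)}\tau(x,y)$ and the containment $\Psi_{r^{-1}(s)}\in B_1(p)$ for $s\in[\frac t2,t]$ enter — one gets $u^{-1}(\sigma)\ge c(\mathcal D,n)\cdot\frac t2$ on the relevant range, hence the $\coth$ term is bounded by a constant depending only on $\e,K,\mathcal D,n$ (and $\lambda$ through $K$ and the geometry), and the length $t-s\le t/2$ of the interval is itself comparable to $\mathcal D$. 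Multiplying through and exponentiating yields $\J(t)/\J(s)\le C(\e,K,\mathcal D,n,\lambda)$, which is the claim.

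The main obstacle I anticipate is making the first step — the identification of $\frac{d}{ds}\log\J$ with (a bounded multiple of) $\Delta c_x$ along $\Psi$ — fully rigorous in the presence of the potential $V$, since unlike the pure Riemannian case the flow $\Psi_t$ is not the geodesic flow and the map $s\mapsto r^{-1}(s)$ introduces a reparametrization that must be tracked carefully; one must check that the ``trace of the shape operator'' of the level sets of $c_x$ really is the Laplacian of $c_x$ with respect to the original metric, up to the explicit conformal factor, and that no extra curvature terms from $\nabla V$ spoil the sign. This is exactly the computation underlying Lemma~\ref{griccati}, so the cleanest route is to extract the Jacobi/Riccati equation used there and integrate its trace directly, rather than re-deriving it. A secondary, more routine, point is to verify that the constant genuinely depends only on the stated quantities and not on $v$ or on the base point beyond what is recorded — this follows because every bound above is uniform over unit-speed (in the control sense) curves staying in $B_1(p)$ on $[\frac t2,t]$.
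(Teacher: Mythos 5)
Your plan matches the paper's proof: the paper also shows $\frac{d}{dt}\log\det B(u(t))=w(u(t))$ (with $\operatorname{tr}A=0$ so the trace of $B^{-1}\dot B+A$ is the log-derivative of $\det B$, which is the Laplacian-comparison quantity), compares $w$ with $\frac{d}{dt}\log\bar b(t)$ for the model $\bar b(t)=\sinh^{n-1}(kt)$ (equivalent to integrating your $\coth$ bound), uses the restriction to $[\frac t2,t]$ together with $\mathcal D$ to keep the ratio $\bar b(\cdot)/\bar b(\cdot)$ finite, and finally converts $\det B$ into $\det(d\Psi_{r^{-1}(\cdot)})$. One imprecision worth fixing: the conversion factor coming from $V=g^{\frac{n-1}{n-2}}$ is \emph{not} automatically ``bounded above and below since $0<g\le K$'' --- the positivity of $g$ gives no uniform lower bound, and the paper closes the argument by invoking the Harnack inequality for the positive eigenfunction $g$ on $B_1(p)$, which is the precise source of the lower bound you also need (both for the $g^{\frac{n-1}{n-2}}$ ratio and for keeping $u^{-1}(r^{-1}(t/2))$ away from the $\coth$ singularity).
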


As a consequence, we obtain a version of the Abresch-Gromoll inequality \cite{AbGr,ChCo} in our setting assuming that the potential $V$ is given by $V=\frac{1}{2}g^{\frac{2n-2}{n-2}}$, where $g$ is a positive function satisfying $\Delta g= -\lambda g$. 

\begin{thm}\label{AG}
Assume that the conditions in Lemma \ref{griccati} hold. Let $q_0$ and $q_1$ be two points in the manifold $M$and let $e$ be the excess function defined by
\[
e(y)=c(y,q_0)+c(y,q_1)-c(q_0,q_1).
\]
Assume that, given $\e>0$, there are constants $L(K,\e)$ and $\e_0(K,n,\e)>0$ such that the followings hold:
\begin{enumerate}
\item $d(q_i,B_1(p))\geq L$, where $i=0,1$, 
\item $\ric\geq -\frac{\lambda(n-1)}{n-2}-\e_0$,
\item $e(p)<\e_0$.
\end{enumerate}
Then $e(x)<\e$ for all $x$ in $B_1(p)$. 
\end{thm}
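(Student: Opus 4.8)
The plan is to mimic the classical Abresch–Gromoll argument, replacing the distance function with the cost function $c$ and the ordinary Laplacian comparison with Lemma~\ref{griccati}. First I would set up the two ``Busemann-like'' functions $b_i(x)=c(x,q_i)-c(p,q_i)$ and observe that, by the triangle inequality for $c$ and the hypothesis $e(p)<\e_0$, each $b_i$ is a Lipschitz function whose sum $b_0+b_1=e$ is small at $p$ and non-negative everywhere. The key elliptic inequality is the bound on $\Delta c_x$ from Lemma~\ref{griccati}: along a minimizer $\Psi_t(v)$ emanating from $q_i$ one controls $\Delta b_i$ from above by $\sqrt{\e(n-1)}K^{\frac{n-1}{n-2}}\coth(\cdots)$. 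When $q_i$ is at distance $\geq LR$ (here $R=1$), the argument $u^{-1}(t)$ is large on $B_1(p)$, so the $\coth$ term is close to $1$ and $\Delta b_i$ is bounded above by something like $\sqrt{\e(n-1)}K^{\frac{n-1}{n-2}}+c_1(\e_0)$; hence $\Delta e$ is bounded above by a small-ish constant on $B_1(p)$.

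The heart of the matter is then to run a maximum-principle / barrier estimate for $e$. Because $e\geq 0$ everywhere, $e$ is small at $p$, and $e$ has a controlled upper bound on its Laplacian (it is ``almost superharmonic'' in the relevant direction but in fact we need the opposite sign carefully — one works with the fact that $e$ is a supersolution of a suitable inequality), one builds an explicit radial comparison function $\phi$ on the annulus $B_1(p)\setminus B_\rho(p)$ solving the corresponding ODE model (the warped-product model with $f(r)=\cosh(r\sqrt{\lambda/(n-2)})$), which is large on $\partial B_\rho(p)$ and controls $e$. Comparing $e$ with $\phi$ via the maximum principle and then optimizing the inner radius $\rho$ (balancing the gradient bound $|\nabla b_i|\leq V^{1/2}$, which gives $e(x)\leq e(p)+ c_2\,d(x,p)$ near $p$ against the decay coming from the Laplacian bound) yields $e(x)<\e$ throughout $B_1(p)$, provided $\e_0$ and $L^{-1}$ are chosen small enough depending on $K,n,\e$.

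I would organize the proof as follows. Step 1: record the Lipschitz bound $|c(x,y)-c(x',y)|\leq \Lambda\, d(x,x')$ with $\Lambda=\sup V^{1/2}\leq K^{\frac{n-1}{2(n-2)}}$ on $B_1(p)$, giving $e(x)\leq e(p)+2\Lambda d(x,p)<\e_0+2\Lambda\rho$ on $B_\rho(p)$. Step 2: apply Lemma~\ref{costProp} to ensure $c_{q_i}$ is smooth along the relevant minimizers through points of $B_1(p)$ (so the Laplacian comparison is pointwise valid, with the usual support-function argument handling the cut locus). Step 3: use Lemma~\ref{griccati} with $L$ large to get $\Delta e \leq h(\e_0,L)$ on $B_1(p)$ where $h\to 0$ appropriately. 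Step 4: construct the comparison function $\phi$ as the solution of the model ODE on $[\rho,1]$ with $\phi(\rho)=\e_0+2\Lambda\rho$ and $\phi'(1)$ chosen so that $e-\phi$ cannot have an interior positive maximum; conclude $e\leq\phi$ on the annulus. Step 5: choose $\rho=\rho(\e)$ and then $\e_0,L$ so that $\sup_{[\rho,1]}\phi<\e$.

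The main obstacle I expect is Step 4 together with the interplay of the non-constant potential in the Laplacian bound: unlike the Riemannian case, the comparison operator here is not the plain Laplacian but one twisted by powers of the eigenfunction $g$ (reflecting the warped-product model), so the barrier $\phi$ must be the solution of the correct weighted ODE, and one must verify that the $\coth$-type right-hand side in Lemma~\ref{griccati} genuinely linearizes to the model operator as $L\to\infty$ and $\e_0\to 0$. Getting the dependence of $k(\e)$ on $K=\max g$, $n$, $\lambda$, and $v$ (via the volume growth from Lemma~\ref{volEst}, which enters when one passes from the pointwise Laplacian bound to an integral/segment estimate for $e$) — while keeping it independent of $R$ after rescaling — is the delicate bookkeeping that makes the argument work.
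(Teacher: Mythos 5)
Your Steps~1--3 match the paper's strategy: parameterize the minimizers emanating from $q_i$, invoke Lemma~\ref{griccati} (with the $\coth$ argument large when $L$ is large) to conclude $\Delta e\leq \sqrt\e\,C''(K,n)$ on $B_1(p)$, and handle regularity issues as in the Riemannian cut-locus argument. The gap is in Step~4. You propose to build a barrier $\phi$ centered at $p$ on the annulus $B_1(p)\setminus B_\rho(p)$ and to run a maximum-principle argument with boundary data only on the inner sphere $\partial B_\rho(p)$; but since $e$ has no smallness control on $\partial B_1(p)$ (it is merely a priori bounded by its Lipschitz constant, not by $\e$), one cannot conclude $e\leq\phi$ in the annulus. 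The maximum principle needs data on both boundary components, and the ``choose $\phi'(1)$'' trick cannot substitute for this because nothing constrains the normal derivative of $e$ at $\partial B_1(p)$.

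What the paper does instead is a contradiction argument with the barrier centered at the bad point. Suppose $e(x)\geq\e_2$ for some $x\in B_1(p)$. Let $\varphi_{n,k}$ solve $\partial_r^2\varphi+(n-1)\frac{s_k'(r)}{s_k(r)}\partial_r\varphi=1$ with $\varphi(l,l)=0$, where $k=\sqrt{\lambda/(n-2)+\e/(n-1)}$. Note that this is the ordinary Laplacian-comparison profile for the Riemannian distance $d(\cdot,x)$ under the Ricci lower bound, not the ``weighted'' cost ODE you anticipated; the twisting by powers of $g$ only enters through Lemma~\ref{griccati} in the $\Delta e$ bound. Setting $h=\sqrt\e\,C''\,\varphi_{n,k}(d(\cdot,x),l)$ (modified near $x$ so it is $C^1$), one has $\Delta h\geq\sqrt\e\,C''\geq\Delta e$ distributionally, so $e-h$ is superharmonic on the annulus $B_l(x)\setminus B_s(x)$; on $\partial B_l(x)$ one has $e-h=e\geq0$, and on $\partial B_s(x)$ one has $e>h$ by the Lipschitz bound on $e$ together with $e(x)\geq\e_2$. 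The minimum principle then gives $e\geq h$ throughout the annulus, in particular $e(p)\geq h(p)\geq\sqrt\e\,C''\,\varphi_{n,k}(1,l)$, contradicting $e(p)<\e_0$ once $\e_0$ is chosen small enough. In short: the barrier should be centered at the bad point $x$, the statement is proved by contradiction, and the barrier is built from the unweighted Riemannian distance comparison rather than from the cost function.
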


The proof of Theorem \ref{AG} follows closely that of the corresponding result in \cite{AbGr,ChCo}. We give the proof here for the purpose of introducing notations and results needed for later sections. The rest of this section is devoted to the proofs. 

\begin{proof}[Proof of Lemma \ref{griccati}]
Let $w_1,\ldots,w_{n-1}$ be an orthornomal frame of the space $\{v\in T_xM|\,|v|_x=V(x)\}$ and let $w_0=\partial_t$. Let $B(t)$ be the matrix defined by 
\[
	d\Psi_{(t,v)}(w_i)=\sum_{j=0}^{n-1}B_{ij}(t)v_j(t),
\]
where $v_0(t)=\frac{\dot \Psi_t(v)}{|\dot \Psi_t(v)|}$ and $\{v_1(t),\ldots,v_{n-1}(t)\}$ is an orthonormal frame of $v_0(t)^\perp$ such that $\dot v_i(t)$ is contained in $\mathbb{R}v_0(t)$, $i=1,\ldots,n-1$.

Let $E(t)=(v_0(t),\ldots,v_{n-1}(t))^T$ and let $\dot E(t)=\left(\frac{D}{dt}v_0(t),\ldots,\frac{D}{dt}v_{n-1}(t)\right)^T$. It follows that $\dot E(t)=A(t)E(t)$, where 
\[
	A(t)=\left(\begin{array}{cc}
	0 & A_1(t)\\
	-A_1(t)^T & O
	\end{array}\right)
\]
and $A_1(t)=(\begin{array}{ccc}\left<\nabla V(\Psi_t(v)),v_1(t)\right> & \ldots & \left<\nabla V(\Psi_t(v)),v_{n-1}(t)\right>\end{array})$. Therefore, 
\[
	\frac{D}{dt}d\Psi_{(t,v)}(w_i)=\sum_{j=0}^{n-1}\left(\dot B_{ij}(t)+\sum_{k=0}^{n-1}B_{ik}(t)A_{kj}(t)\right)v_j(t). 
\]

By differentiating the above equation again with respect to $t$, it follows that 
\[
\begin{split}
	&\ddot B(t)+2\dot B(t)A(t)+B(t)\dot A(t)+B(t)A(t)^2=-B(t)R(t)+B(t)W(t)
	\end{split}
\]
where $W_{ij}(t)=\frac{1}{2}\nabla^2 V^2(v_i(t),v_j(t))$. 

Let $s(t)$ be the trace of the matrix $B(t)^{-1}\dot B(t)+A(t)$. A computation as in \cite[Section 3]{Le} shows that 
\[
\begin{split}
&\dot s(t)+\frac{s(t)^2}{n-1}-\frac{2s(t)}{n-2}\frac{\left<\nabla g(\Psi_t(v)),\dot\Psi_t(v)\right>}{g(\Psi_t(v))}-\epsilon g(\Psi_t(v))^{\frac{2n-2}{n-2}} \leq 0.
\end{split}
\]

Let $w(t)=g(\Psi_t(v))^{-\frac{2}{n-2}}s(t)$ and $\dot u(t)=\frac{1}{g(\Psi_{u(t)}(v))^{\frac{2}{n-2}}}$. Another computation shows that 
\[
\begin{split}
&\frac{d}{dt} w(u(t))+\frac{w(u(t))^2}{n-1} -\e K^{\frac{2(n-3)}{n-2}}\leq 0.
\end{split}
\]

It follows that $w(u(t))\leq \sqrt{\e(n-1)}K^{\frac{n-3}{n-2}}\coth\left(\sqrt{\frac{\e}{n-1}}K^{\frac{n-3}{n-2}}\,t\right)$. 
\end{proof}

\begin{proof}[Proof of Lemma \ref{volEst}]
We use the same notations as that of the proof of Lemma \ref{griccati}. The function $b(t)=\det(B(t))$ satisfies 
\[
	\frac{d}{dt} \log b(u(t))=s(u(t))g(\Psi_{u(t)}(v))^{-\frac{2}{n-2}}=w(u(t)). 
\]

Let $\bar b(t)=\sinh^{n-1}\left(\sqrt{\frac{\e}{n-1}}K^{\frac{n-3}{n-2}}t\right)$. Then 
\[
\begin{split}
	\frac{d}{dt}\log \bar b(t)&=\sqrt{\e(n-1)}K^{\frac{n-3}{n-2}}\coth\left(\sqrt{\frac{\e}{n-1}}K^{\frac{n-3}{n-2}}t\right)\\
	&\geq \frac{d}{dt} \log b(u(t)). 
	\end{split}
\]

It follows that 
\[
	\begin{split}
	&\frac{b(r^{-1}(t))}{b(r^{-1}(s))}\leq \frac{\bar b(u^{-1}(r^{-1}(t)))}{\bar b(u^{-1}(r^{-1}(s)))}\leq \frac{\bar b(u^{-1}(r^{-1}(t)))}{\bar b(u^{-1}(r^{-1}(t/2)))}\leq C(\e,K,\mathcal D,n). 
	\end{split}
\]

Therefore, 
\[
	\begin{split}
	&\frac{g(\Psi_{r^{-1}(t)})^{\frac{n-1}{n-2}}}{g(\Psi_{r^{-1}(s)})^{\frac{n-1}{n-2}}}\frac{\det(d(\Psi_{r^{-1}(t)}))}{\det(d(\Psi_{r^{-1}(s)}))}\leq C(\e,K,\mathcal D,n). 
	\end{split}
\]

The result follows from the Harnack inequality for $g$ (see, for instance, \cite{Li}).




\end{proof}

\begin{proof}[Proof of Theorem \ref{AG}]
Let $\gamma$ be a minimizer such that $\gamma(0)=q_i$ and $\gamma(u(t))$ is contained in $B_1(p)$. 
A computation shows that  
\[
	\left|\frac{d}{dt}\gamma(u(t))\right|\leq \frac{\left|\dot\gamma(u(t))\right|}{g(\gamma(u(t)))^{\frac{2}{n-2}}}=g(\gamma(u(t)))^{\frac{n-3}{n-2}}\leq K^{\frac{n-3}{n-2}}.
\]

Assume that $L\geq\frac{K^{\frac{n-3}{n-2}}}{\sqrt\e}$. It follows from the assumptions that 
\[
	\frac{K^{\frac{n-3}{n-2}}}{\sqrt \e}\leq d(q_i,\gamma(u(t)))\leq K^{\frac{n-3}{n-2}}t. 
\]
Therefore, by Lemma \ref{griccati}, $\Delta e(\gamma(u(t)))\leq \sqrt{\e}\,C''(K,n)$. 

Let
\[
s_{k}(t)=\frac{\sinh(k\,t)}{k}
\]
and
\[
\varphi_{n,k}(r,l)=\int_r^l\int_t^l\left(\frac{s_k(\tau)}{s_k(t)}\right)^{n-1}d\tau\,dt.
\]

The function $\varphi_{n,k}$ satisfies
\[
\begin{split}
&\partial_r\varphi_{n,k}(r,l)=-\int_r^l\left(\frac{s_k(\tau)}{s_k(r)}\right)^{n-1}d\tau,\\
&\partial_r^2\varphi_{n,k}(r,l)=1+(n-1)\int_r^l\left(\frac{s_k(\tau)}{s_k(r)}\right)^{n-1}\left(\frac{s_k'(r)}{s_k(r)}\right)d\tau,\\
&\partial_r^2\varphi_{n,k}(r,l)+\frac{(n-1)s_k'(r)}{s_k(r)}\partial_r\varphi_{n,k}(r,l)=1. 
\end{split}
\]

Let $\bar\varphi_{s,n,k,\e}$ be the $C^1$ function which is decreasing linearly on $[0,s]$ and equal to $\sqrt\e \,C''\varphi_{n,k}$  on $[s,l]$. It follows that $r\mapsto -\varphi_{n,k}(r,l)$ is increasing and concave. So $y\mapsto-\varphi(d(y,x),l)$ is locally semi-concave on $M-\{x\}$ (see \cite{CaSi}).

Let $x$ be a point in $B_{1}(p)$ such that $e(x)\geq\e_2$ and let $h_{x,s,l,\e}$ be the locally semi-convex function defined on $B_l(x)-\{x\}$ by
$h_{x,s,l,\e}(y)=\bar\varphi_{s,n,k,\e}(d(y,x),l)$. By choosing $\e$ and $s$ small enough, we can assume that $e>h_{x,s,l,\e}$ on $B_{s}(x)$. 

On the other hand, the above computation together with the Laplacian comparison theorem shows that the followings hold in the distributional sense on $B_l(x)-B_{s}(x)$
\[
\begin{split}
&\nabla h_{x,s,l,\e}(y)=\partial_r\varphi_{n,k}(d(y,x),l)\nabla d_{x},\\
&\Delta h_{x,s,l,\e}(y)=\partial_r^2\varphi_{n,k}(d(y,x),l)+\partial_r\varphi_{n,k}(d(y,x),l)\Delta d_{x}\geq 1, 
\end{split}
\]
if $k=\sqrt{\frac{\lambda}{n-2}+\frac{\e}{n-1}}$. Therefore, $\Delta(e -\sqrt\e C'' h_{x,s,l,\e})< 0$ on $B_l(x)-B_{s}(x)$. If there is a point $y_0$ which satisfies $d(x,y_0)=l$ and $e-h_{x,s,l,\e}$ achieves the infimum at $y_0$ among all points $y$ in $B_l(x)-B_{s}(x)$, then $e(y_0)-h_{x,s,l,\e}(y_0)\leq e(y)-h_{x,s,l,\e}(y)$ and
\[
\begin{split}
&\sqrt\e\,C''\varphi_{n,k}(1,l)\leq \sqrt\e\,C''\varphi_{n,k}(d(x,p),l)\\
&\leq h_{x,s,l,\e}(p)-h_{x,s,l,\e}(y_0)\leq e(p)-e(y_0)\leq e(p)<\e.
\end{split}
\]
This gives a contradiction if $\e$ is sufficiently small and the assertion follows. 
\end{proof}

\smallskip

\section{Harmonic approximations and their estimates}\label{HA}

Recall that $b_i(y)=c(y,q_i)-c(p,q_i)$. In this section, we discuss the key estimates involving $b_i$ and its harmonic approximation $\bar b_i$ defined to be the harmonic function which is equal to $b_i$ on the boundary of the ball of radius 1 centered at $p$.

\begin{thm}\label{harApprox}
Under the assumptions of Theorem \ref{AG}, the followings hold:

\begin{enumerate}
\item $|b_i-\bar b_i|<k_1(\e)$ on $B_1(p)$,
\item $\frac{1}{\vol(B_1(p))}\int_{B_1(p)}|\nabla(b_1-\bar b_1)|^2< k_2(\e)$,
\item $|\bar b_1+\bar b_0|<k_3(\e)$ on $B_{1/2}(p)$,
\end{enumerate}
where $k_i(\e)\to 0$ as $\e\to 0$.
\end{thm}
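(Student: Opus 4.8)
\textbf{Proof plan for Theorem \ref{harApprox}.}

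The strategy follows the Cheeger--Colding template, adapted to the cost function $c$ and the weight $V = g^{\frac{n-1}{n-2}}$. The starting point is the key differential inequality: by Lemma \ref{griccati} applied with both endpoints $q_0$ and $q_1$, one obtains $\Delta b_i \leq c_1(\e)$ in the distributional sense on $B_1(p)$ (more precisely, for the points that lie on minimizers reaching far enough toward $q_i$, which by the distance hypothesis (1) includes all of $B_1(p)$). Combined with the Abresch--Gromoll estimate of Theorem \ref{AG}, which gives $0 \leq e = b_0 + b_1 + (c(p,q_0)+c(p,q_1)-c(q_0,q_1)) < \e$ on $B_1(p)$, and the normalization $e(p) < \e_0$, one gets that $b_0$ and $b_1$ are each \emph{almost harmonic} (both $\Delta b_i$ and $-\Delta b_i = \Delta(e - b_{1-i}) - \Delta e \gtrsim -\Delta b_{1-i} - c_1(\e)$ are controlled). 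The function $b_i$ is also Lipschitz with a bound depending only on $K,n$, by the gradient bound $|\nabla b_i| = V = g^{\frac{n-1}{n-2}} \leq K^{\frac{n-1}{n-2}}$ coming from the eikonal equation \eqref{eikonal}.

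For statement (1), I would run the standard argument: let $\bar b_i$ solve $\Delta \bar b_i = 0$ on $B_1(p)$ with $\bar b_i = b_i$ on $\partial B_1(p)$. Testing the equation for $b_i - \bar b_i$ against itself (it vanishes on the boundary) yields $\int_{B_1(p)} |\nabla(b_i - \bar b_i)|^2 = -\int_{B_1(p)} (b_i - \bar b_i)\,\Delta b_i$, and since $\Delta b_i$ is a signed measure with $\Delta b_i \leq c_1(\e)$ from above while the total mass $\int_{B_1(p)} \Delta b_i = \int_{\partial B_1(p)} \partial_n b_i$ is bounded (Lipschitz bound), one controls $\int |\nabla(b_i-\bar b_i)|^2$ by $c_2(\e)$ times the Lipschitz bound — this is statement (2). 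To pass from the $L^2$ gradient bound to the $C^0$ bound (1), I would use the Poincar\'e/Sobolev inequality available from the Ricci lower bound (4) and the volume lower bound (3) (maximum of $g$ in $B_R(p)$ gives a uniform Harnack/volume comparison, hence a doubling constant), together with a maximum-principle comparison: $b_i - \bar b_i$ is a subsolution up to error $c_1(\e)$, so an $L^2$ bound on a sub/supersolution upgrades to a sup bound via De Giorgi--Nash--Moser iteration with constants depending only on the suppressed parameters. Statement (3) then follows by combining (1) for $i=0,1$ with the excess bound: $|\bar b_0 + \bar b_1| \leq |b_0 + b_1| + 2k_1(\e) = |e - (c(p,q_0)+c(p,q_1)-c(q_0,q_1))| + 2k_1(\e)$, and evaluating the constant at $p$ using $e(p) < \e_0$ and $b_i(p) = 0$ shows the additive constant is $O(\e)$, while $e < \e$ everywhere on $B_{1/2}(p) \subset B_1(p)$; so $|\bar b_0 + \bar b_1| < k_3(\e)$.

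The main obstacle I anticipate is the \emph{distributional Laplacian comparison} step — ensuring that the pointwise inequality of Lemma \ref{griccati}, which is stated along a single minimizer $\Psi_t(v)$ and only where $c_{q_i}$ is smooth, genuinely globalizes to a distributional inequality $\Delta b_i \leq c_1(\e)$ on all of $B_1(p)$. In the Riemannian distance-function case this is classical (the cut locus has measure zero and the singular part of $\Delta d$ has the favorable sign), but here one is dealing with the control-theoretic cost $c$, so one must invoke Lemma \ref{costProp} (smoothness of $c_x$ off a small set, invertibility of $d\Psi$) plus a semiconcavity argument (as referenced via \cite{CaSi}) to control the singular part. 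The other delicate point is making sure every constant in the Moser iteration and Poincar\'e inequality depends only on $n, \lambda, K, v$ (and not on $R$, after rescaling $B_R \to B_1$) — this is where hypotheses (3), (4), and (5) enter, the sectional curvature bound (5) being needed to control the geometry near $p$ at the relevant scale.
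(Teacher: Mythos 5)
Your proposal identifies the correct ingredients (the one-sided Laplacian comparison from Lemma \ref{griccati}, the excess bound from Theorem \ref{AG}, the semiconcavity/singular-part sign observation), but the architecture of the argument has a genuine gap, and the order in which you derive (1) and (2) is reversed relative to what the logic actually allows.

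First, you try to establish (2) before (1), claiming that from $\Delta b_i \le c_1(\e)$ and the bounded total mass of $\Delta_{\mathcal D}b_i$ one gets $\int_{B_1(p)}|\nabla(b_i-\bar b_i)|^2 \le c_2(\e)\cdot\mathrm{Lip}$. This is not true with only those two inputs: the negative part of $\Delta_{\mathcal D}b_i$ is bounded but not small (its $L^1$ mass is comparable to $\mathrm{vol}(\partial B_1(p))$), so $\int(\bar b_i - b_i)\,d\Delta_{\mathcal D}b_i$ is only $O(1)$ unless you already know $|b_i-\bar b_i|<k(\e)$. The paper's proof of (2) explicitly adds a constant $\e_2+\e_3$ (coming from the $C^0$ bound (1)) to make $\bar b_i - b_i + \e_2 + \e_3 \ge 0$, uses the nonpositive singular part on that nonnegative test function, and then the smallness of $\|\bar b_i - b_i\|_{L^\infty}$ is what makes the whole integral small. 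So (2) genuinely depends on (1).

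Second, your route to (1) via De Giorgi--Nash--Moser cannot deliver the upper bound on $b_1-\bar b_1$. The inequality $\Delta(b_1-\bar b_1)\le c_1(\e)$ makes $b_1-\bar b_1$ a \emph{super}solution, which controls it from below only. Your parenthetical claim that ``$-\Delta b_i$ is controlled'' via $-\Delta b_1 = \Delta(e-b_0)-\Delta e$ does not go through: you have an upper bound on $\Delta e$ but no lower bound, so no two-sided pointwise control of $\Delta b_1$ is available. The paper circumvents this by a second application of the barrier maximum principle to the function $b_0 + \bar b_1 - \tfrac12\sqrt\e\,C''h_{z,1}$: since $\Delta b_0 \le c_1(\e)$, $\bar b_1$ is harmonic, and $\Delta h_{z,1}\ge 1$, this is superharmonic, hence bounded below by its boundary infimum; rewriting $b_0+\bar b_1 = e - e(p) - b_1 + \bar b_1$ and using $e\ge0$, $e(p)<\e_0$ then yields $b_1-\bar b_1 \le c_3(\e)$. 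This maximum-principle step through $b_0+\bar b_1$ is the mechanism by which the excess smallness enters, and it is absent from your plan. The paper's proof of (1) is thus an elementary two-sided barrier argument, not an $L^2$-to-$L^\infty$ bootstrap, and it must come first. Your statement (3) is handled correctly.
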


Let $f(r)=\cosh\left(r\sqrt{\frac{\lambda}{n-2}}\right)$ and $G(r)=\int_0^rf^{1-n}$. If $\gamma$ is a unit speed geodesic starting from $p$, then 
\[
	\frac{d}{dt}b_1(\gamma(t))\leq |\nabla\bar b_1|_{\gamma(t)}\leq K^{\frac{n-1}{n-2}}. 
\]

Since $\bar b_1$ is harmonic and equal to $b_1$ on the boundary of $B_1(p)$, $\bar b_1$ is bounded above by $K^{\frac{n-1}{n-2}}$. If $K$ is chosen so that 
\begin{equation}\label{KBd}
K^{\frac{n-1}{n-2}}<\int_0^\infty f^{1-n}, 
\end{equation}
then $f(G^{-1}(\bar b_i))$ is well-defined. Finally, we also assume that there is a constant $v>0$ such that 
\begin{equation}\label{vo}
\vol(B_1(p))\geq v
\end{equation} 
and the constants below could depend on $v$.  

\begin{thm}\label{uEst}
Let $u=f(G^{-1}(\bar b_i))^{2-n}-g$. Under the assumptions of Theorem \ref{AG}, (\ref{KBd}), and (\ref{vo}), 
\[
	\sup_{B_1(p)} |u|\leq k(\e)+\sup_{\partial B_1(p)} |u| 
\]
for some positive constant $k(\e)$ which goes to 0 as $\e\to 0$. 
\end{thm}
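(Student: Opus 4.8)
## Proof proposal for Theorem~\ref{uEst}

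The plan is to derive a differential inequality for $u = f(G^{-1}(\bar b_i))^{2-n} - g$ on $B_1(p)$ and then apply a maximum-principle/comparison argument, exactly paralleling the structure of the Abresch--Gromoll argument in the proof of Theorem~\ref{AG} but now for the function $u$ rather than the excess $e$. The first step is a direct computation: since $\bar b_i$ is harmonic, we compute $\Delta\big(f(G^{-1}(\bar b_i))^{2-n}\big)$ using the chain rule, keeping careful track of the $|\nabla\bar b_i|^2$ term and the first- and second-derivative identities for $f$ and $G$. The key point is that the warping function $f(r) = \cosh(r\sqrt{\lambda/(n-2)})$ together with $G(r) = \int_0^r f^{1-n}$ is engineered so that $\phi(s) := f(G^{-1}(s))^{2-n}$ satisfies an ODE of the form $\phi'' = -\tfrac{\lambda}{\text{(something)}}\phi \cdot (\text{structural factor})$, mirroring the eigenvalue equation $\Delta g = -\lambda g$. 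Thus on the \emph{exact} model (where $|\nabla b_i| = g^{(n-1)/(n-2)}$ and the Hessian estimates are equalities) one would get $\Delta\phi(\bar b_i) = -\lambda\,\phi(\bar b_i) = -\lambda\,f(G^{-1}(\bar b_i))^{2-n}$, and hence $\Delta u = -\lambda u$ exactly, forcing $u\equiv 0$ by the boundary condition. In the almost-rigid setting, the right-hand side picks up error terms controlled by $|\nabla\bar b_i|^2 - |\nabla b_i|^2$, by the gap between $\Delta b_i$ and its model value, and by $\e/R^2$ from the Ricci bound.

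Second, I would convert this into the estimate $|\Delta u + \lambda u| \le \psi(\e)$ in the distributional (or integral-averaged) sense on $B_1(p)$, where $\psi(\e)\to 0$. This is where the earlier machinery is used: the $L^2$ gradient bound $\fint_{B_1(p)}|\nabla(b_1 - \bar b_1)|^2 < k_2(\e)$ from Theorem~\ref{harApprox}(2), the Laplacian comparison Lemma~\ref{griccati} (which bounds $\Delta c_x$, hence $\Delta b_i$, from above by a quantity that is $O(\sqrt\e)$ away from the model value under the given distance-to-$q_i$ hypothesis), and the uniform bounds $g\le K$ and the sectional curvature bound $\le C\lambda$ (needed to bound the Hessian of $\bar b_i$ and to control the semi-concavity constants). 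The sectional curvature bound also ensures $f(G^{-1}(\bar b_i))^{2-n}$ stays in a compact range where $\phi$ and its derivatives are Lipschitz, so the error terms really are small rather than merely finite.

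Third, I would run the comparison argument. Because $\Delta u \le -\lambda u + \psi(\e)$ is not by itself enough (the sign of $u$ is unknown and $-\lambda u$ has the wrong sign for a naive maximum principle), I would instead bound $u^+ = \max(u,0)$ and $u^- = \max(-u,0)$ separately against the barrier functions $h_{x,s,l,\e}$ built from $\varphi_{n,k}$ that already appear in the proof of Theorem~\ref{AG}, with $k = \sqrt{\lambda/(n-2) + \e/(n-1)}$; alternatively, use a Moser-type iteration or the mean-value inequality for the operator $\Delta + \lambda$, which is legitimate on $B_1(p)$ since $\lambda$ times the diameter is controlled. Suppose $u$ attains an interior value exceeding $k(\e) + \sup_{\partial B_1(p)}|u|$ at some $x\in B_{1}(p)$; comparing $u$ with the barrier on an annulus $B_l(x)\setminus B_s(x)$ and using $\Delta(u - C\psi(\e)^{1/2} h) < 0$ there, the infimum over the annulus is attained on the outer sphere, and chasing the inequality $C\psi(\e)^{1/2}\varphi_{n,k}(\cdot,l)\le u(p) - u(y_0)\le \sup_{\partial B_1(p)}|u| + \text{error}$ yields a contradiction for $\e$ small — identical in spirit to the last display in the proof of Theorem~\ref{AG}.

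The main obstacle I expect is Step~2: showing that the Hessian/Laplacian error terms in $\Delta\phi(\bar b_i) + \lambda\phi(\bar b_i)$ genuinely close up to $o(1)$ as $\e\to 0$. Lemma~\ref{griccati} only gives a one-sided (upper) bound on $\Delta b_i$; getting matching control requires combining it with the volume growth estimate Lemma~\ref{volEst} and an integrated (rather than pointwise) version of the gradient estimate, because $|\nabla b_i|$ need not equal $g^{(n-1)/(n-2)}$ pointwise away from the minimizing geodesics — only in an $L^2$-average sense. Reconciling the pointwise nature of the maximum-principle conclusion with the merely-$L^2$ nature of the available gradient control is the delicate part, and I anticipate it forces the use of the Harnack inequality for $g$ and an auxiliary cutoff/integration-by-parts argument, as well as careful bookkeeping of how the constants depend on $v$, $K$, $C$, $n$, and $\lambda$ but not on $R$.
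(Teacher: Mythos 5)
The overall skeleton of your proposal — compute $\Delta\big(f(G^{-1}(\bar b_i))^{2-n}\big)$ by the chain rule using that $\bar b_i$ is harmonic, show the equation reduces to something like $\Delta g=-\lambda g$ up to errors controlled by $|\nabla\bar b_i|^2-g^{\frac{2n-2}{n-2}}$, then close by an elliptic mean-value argument — is the right shape, and your second alternative (Moser iteration rather than barriers) is in fact what the paper does, citing \cite[Theorem 8.16]{GiTr} with the Sobolev inequality of \cite{Li}. But there is a genuine gap at the heart of your Step~2/Step~3 that would make the comparison argument fail as stated.

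You predict the linearized equation to be $\Delta u+\lambda u=\text{error}$ and explicitly acknowledge that the $+\lambda u$ term has the wrong sign for the maximum principle, proposing to work around it with barriers or by treating $\Delta+\lambda$ as the operator. This sign is in fact wrong: the correct decomposition, after the chain-rule computation $\Delta f(G^{-1}(\bar b_i))^{2-n}=-\lambda f(G^{-1}(\bar b_i))^{n}|\nabla\bar b_i|^2$, is
\[
\Delta u \;=\; -\lambda f(G^{-1}(\bar b_i))^{n}\big(|\nabla\bar b_i|^2-g^{\frac{2n-2}{n-2}}\big)\;+\;F_2\,u,\qquad
F_2=\lambda\,g\,f(G^{-1}(\bar b_i))^{n}\,\frac{a^{\frac{n}{n-2}}-g^{\frac{n}{n-2}}}{a-g}\ \ (a=f(G^{-1}(\bar b_i))^{2-n}),
\]
and since $t\mapsto t^{n/(n-2)}$ is increasing on $(0,\infty)$ the difference quotient is nonnegative, so $F_2\ge 0$ and bounded. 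Thus the zeroth-order coefficient of the relevant operator $\Delta-F_2$ is nonpositive — exactly the hypothesis that \cite[Theorem 8.16]{GiTr} requires — and no sign-workaround is needed. (Linearizing at $u=0$ gives coefficient $+\tfrac{n\lambda}{n-2}$, not $-\lambda$.) With the sign you wrote, neither the barrier comparison of Theorem~\ref{AG} nor Moser iteration closes, since $\Delta u=-\lambda u$ with zero boundary data admits nontrivial solutions whenever $\lambda$ is a Dirichlet eigenvalue of $B_1(p)$; the nonnegativity of $F_2$ is precisely what rules this out.

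A secondary inaccuracy: you invoke Lemma~\ref{griccati}, Lemma~\ref{volEst}, and the ``one-sided Laplacian comparison for $\Delta b_i$'' as the source of the error terms, but the proof works with the harmonic approximation $\bar b_i$ (so $\Delta\bar b_i=0$ exactly), and the only input from earlier sections is the $L^p$ smallness of $|\nabla\bar b_i|^2-g^{\frac{2n-2}{n-2}}$ coming from Theorem~\ref{harApprox}(2), together with $g\le K$ and~(\ref{KBd}) to keep $a$ in a range where the difference quotient stays bounded. Your ``pointwise vs.\ $L^2$'' concern is legitimate and is exactly what the Moser iteration handles, turning an $L^p$ right-hand side into a sup-bound on $B_1(p)$.
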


As a consequence, 

\begin{cor}\label{uIntEst}
Under the assumptions of Theorem \ref{uEst} and the condition that 
$\sup_{\partial B_1(p)}|u|<\e$,	 
\[
\begin{split}
&\int_{B_1(p)}|\nabla u|^2\leq k(\e)
\end{split}
\]
for some positive constant $k(\e)$ which goes to 0 as $\e\to 0$. 
\end{cor}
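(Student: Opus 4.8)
The plan is to derive the $L^2$ gradient bound from the sup bound in Theorem \ref{uEst} together with an integral (distributional) inequality for $\Delta u$, using a standard Bochner/integration-by-parts argument of the type employed in \cite{ChCo}. First I would compute, at least in the distributional sense on $B_1(p)$, the Laplacian of $u = f(G^{-1}(\bar b_i))^{2-n} - g$. Since $\bar b_i$ is harmonic, $\Delta[f(G^{-1}(\bar b_i))^{2-n}]$ is governed by $|\nabla \bar b_i|^2$ and the ODE satisfied by $r \mapsto f(G^{-1}(r))^{2-n}$: writing $\phi(r) = f(G^{-1}(r))^{2-n}$, one checks (this is the computation behind hypothesis (2) of Theorem \ref{main}) that $\phi'' + \frac{\lambda}{n-2}\,\phi\cdot(\text{something}) = 0$, so that $\Delta \phi(\bar b_i) = \phi''(\bar b_i)\,|\nabla \bar b_i|^2$ is, up to a controlled factor, $-\lambda\,\phi(\bar b_i)\,|\nabla \bar b_i|^2$. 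Since $\Delta g = -\lambda g$, the two terms nearly cancel, and $\Delta u$ is forced to be small in $L^1$ in terms of the Abresch-Gromoll-type estimates from Section \ref{ET} (Theorem \ref{AG}, Lemma \ref{griccati}) and the almost-harmonicity $\|\nabla(b_i - \bar b_i)\|_{L^2}^2 < k_2(\e)$ from Theorem \ref{harApprox}(2), plus the fact that $|\nabla \bar b_i|$ is uniformly bounded by $K^{(n-1)/(n-2)}$. Concretely I expect an estimate of the form $\int_{B_1(p)} |\Delta u| \le k(\e)$, or a one-sided version $\Delta u \ge -k(\e)$ weakly, which is what one really needs.

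Second, I would multiply by a cutoff function $\psi$ with $\psi \equiv 1$ on $B_{3/4}(p)$ (say), supported in $B_1(p)$, $|\nabla \psi| \le C$, and integrate by parts:
\[
\int_{B_1(p)} \psi^2 |\nabla u|^2 = -\int_{B_1(p)} \psi^2 u \,\Delta u - 2\int_{B_1(p)} \psi u \langle \nabla \psi, \nabla u\rangle.
\]
The first term on the right is bounded by $\big(\sup|u|\big)\int \psi^2 |\Delta u| \le (k(\e)+\e)\cdot k(\e)$ using Theorem \ref{uEst} (whose hypothesis $\sup_{\partial B_1(p)}|u| < \e$ is exactly the standing assumption of the corollary, giving $\sup_{B_1(p)}|u| \le k(\e)+\e$) and the $L^1$ Laplacian bound. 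The second term is absorbed by Cauchy-Schwarz: $2\int \psi u \langle \nabla\psi,\nabla u\rangle \le \tfrac12 \int \psi^2|\nabla u|^2 + 2\int |\nabla\psi|^2 u^2 \le \tfrac12\int\psi^2|\nabla u|^2 + C\,\vol(B_1(p))\,(k(\e)+\e)^2$. Rearranging gives $\int_{B_3/4(p)}|\nabla u|^2 \le k(\e)$. To upgrade from $B_{3/4}(p)$ to all of $B_1(p)$ one either iterates the cutoff argument on a slightly larger ball (shrinking the outer radius and using that all the estimates in Section \ref{ET} were stated on $B_1(p)$, so one should genuinely work on $B_{1+\delta}(p)$ from the start, or equivalently rescale), or simply observes that the statement on a fixed fraction of the ball is what is used in Section \ref{PO} and absorb the radius into the constants; I would adopt the cutoff-on-a-larger-ball convention to keep the statement as written.

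The main obstacle I anticipate is \emph{Step one}: getting the distributional Laplacian inequality $\int |\Delta u| \le k(\e)$ cleanly. The function $f(G^{-1}(\bar b_i))^{2-n}$ is only as regular as the harmonic function $\bar b_i$ allows (smooth in the interior, so that is fine), but the cancellation $\Delta u \approx 0$ is not exact — it relies on (i) $|\nabla \bar b_i| \approx 1$ in an averaged sense, which is where Theorem \ref{harApprox}(2) and the Abresch-Gromoll inequality enter, and (ii) controlling the discrepancy between $c(\cdot,q_i)$ and its harmonic replacement, including the error terms coming from the eigenfunction $g$ and the fact that the relevant comparison is with the warped metric $\frac1V|\cdot|$ rather than the Riemannian one. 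Keeping track of which error terms are $k_i(\e)$ versus genuinely bounded constants, and making sure the bound $\sup|u| \le k(\e)+\e$ multiplies against an $L^1$-small (not merely bounded) quantity $|\Delta u|$, is the delicate bookkeeping. Once the Laplacian estimate is in hand, the integration-by-parts half is routine.
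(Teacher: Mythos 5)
Your proposal is correct and matches the natural (implicit) approach: the paper gives no explicit proof of Corollary \ref{uIntEst}, and the integration-by-parts argument you describe is exactly what is intended. The obstacle you flag in Step one is in fact already resolved in the paper's proof of Theorem \ref{uEst}, where the identity $\Delta u = F_1 + F_2 u$ is derived with $\frac{1}{\vol(B_1(p))}\int_{B_1(p)}|F_1| < k(\e)$ and $0\le F_2\le C$; combining this with $\sup_{B_1(p)}|u| \le k(\e)+\e$ (the conclusion of Theorem \ref{uEst} under the standing hypothesis $\sup_{\partial B_1(p)}|u|<\e$) gives $\int_{B_1(p)}|\Delta u| \le k'(\e)\vol(B_1(p))$ directly, so you can cite that decomposition rather than re-deriving the ODE cancellation heuristically. (Note also that the exact identity is $\Delta\bigl[f(G^{-1}(\bar b_i))^{2-n}\bigr]=-\lambda f(G^{-1}(\bar b_i))^{n}|\nabla\bar b_i|^2$, with exponent $n$ rather than $2-n$ as your sketch suggests, though this does not affect the argument.) The subtlety about $B_1(p)$ versus a slightly smaller ball that you raise is real but immaterial for the paper's purposes: Corollary \ref{uIntEst} is only invoked in the proof of Theorem \ref{HessianEstI} against a cutoff supported in $B_1(p)$, so the estimate on $B_{3/4}(p)$ (equivalently, with a cutoff weight) is exactly what is consumed, and your cutoff-on-a-larger-ball remedy is the standard fix if one insists on the statement as written.
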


Let $F(r)=\int_0^r f$ and $\mF_i =F(G^{-1}(\bar b_i))$. Finally, we obtain the following Hessian estimates. 

\begin{thm}\label{HessianEstI}
Under the assumptions of Theorem \ref{uEst}, 
\[
	\int_{B_{\frac{1}{2}}(p)}\left|\nabla^2\mF_i-\frac{\Delta \mF_i}{n} I\right|^2\leq k(\e)
\]
for some positive constant $k(\e)$ which goes to 0 as $\e\to 0$. 
\end{thm}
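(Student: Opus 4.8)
The plan is to derive the Hessian estimate from a Bochner-type integral identity, exactly as in the classical Cheeger--Colding argument, but with the eigenfunction weight carefully tracked. First I would compute, using the chain rule and the ODEs satisfied by $f$ and $G$, an expression for $\nabla^2\mF_i$ and $\Delta\mF_i$ in terms of $\bar b_i$, $\nabla\bar b_i$, $\nabla^2\bar b_i$ and the one-variable functions $f,F,G$ evaluated at $G^{-1}(\bar b_i)$. Since $\bar b_i$ is harmonic, the terms in $\Delta\mF_i$ reduce to a purely radial expression involving $|\nabla\bar b_i|^2$ and $f,F$. The point of introducing $\mF_i=F(G^{-1}(\bar b_i))$ (rather than $\bar b_i$ itself) is that in the model space $\mF_i$ is, up to affine reparametrization, the coordinate on the $\mathbb R$ factor, so that $\nabla^2\mF_i - \frac{\Delta\mF_i}{n}I$ is exactly the obstruction to splitting. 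I would record the identity
\[
\left|\nabla^2\mF_i-\tfrac{\Delta\mF_i}{n}I\right|^2 = |\nabla^2\mF_i|^2 - \tfrac{(\Delta\mF_i)^2}{n},
\]
so that the task becomes bounding $\int_{B_{1/2}(p)}\bigl(|\nabla^2\mF_i|^2 - \tfrac1n(\Delta\mF_i)^2\bigr)$.

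Next I would set up the Bochner formula for $\mF_i$ on $B_1(p)$. Writing $\phi=\mF_i$, Bochner gives
\[
\tfrac12\Delta|\nabla\phi|^2 = |\nabla^2\phi|^2 + \langle\nabla\phi,\nabla\Delta\phi\rangle + \ric(\nabla\phi,\nabla\phi).
\]
The plan is to integrate this against a cutoff function $\psi$ supported in $B_1(p)$ with $\psi\equiv1$ on $B_{1/2}(p)$, integrate by parts twice, and rearrange to isolate $\int\psi|\nabla^2\phi|^2$. The Ricci lower bound (assumption in Theorem~\ref{AG}, i.e. $\ric\ge -\frac{\lambda(n-1)}{n-2}-\e_0$) controls the curvature term up to an error of order $\e$; the $\langle\nabla\phi,\nabla\Delta\phi\rangle$ term and the boundary/cutoff terms must be controlled using the explicit formula for $\Delta\mF_i$ together with Theorem~\ref{uEst} and Corollary~\ref{uIntEst}, which say $u = f(G^{-1}(\bar b_i))^{2-n}-g$ is small in sup norm and $\nabla u$ is small in $L^2$. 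Concretely, $\Delta\mF_i$ should be expressible, modulo terms involving $u$ and $|\nabla(b_i-\bar b_i)|$, in the form $-(n-1)\frac{f'}{f}(G^{-1}(\bar b_i))|\nabla\bar b_i|^2\cdot(\text{stuff}) + \dots$ matching the model value of $\Delta\mF_i$; the difference feeds into an $L^2$ error bounded by $k_2(\e)$ from Theorem~\ref{harApprox}(2) and $k(\e)$ from Corollary~\ref{uIntEst}. I would also use Theorem~\ref{harApprox}(1) to replace $b_i$ by $\bar b_i$ where needed and the gradient bound $|\nabla\bar b_i|\le K^{(n-1)/(n-2)}$ together with the sectional curvature bound (assumption (5) of Theorem~\ref{main}, needed for the local regularity/Hessian $L^2$ a priori bounds on the harmonic function) to guarantee the integrals in question are finite and the cutoff errors are of the right order.

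After integrating by parts, the combination that survives is $\int\psi\bigl(|\nabla^2\phi|^2 - \tfrac1n(\Delta\phi)^2\bigr)$ plus one crucial extra positive term. The key algebraic input is that in the model the harmonic approximations realize equality in the Cauchy--Schwarz inequality $|\nabla^2\phi|^2\ge\frac1n(\Delta\phi)^2$, and more importantly the identity $\frac12\Delta|\nabla\phi|^2$ in the model, after using the explicit $\Delta\phi$ and $|\nabla\phi|^2$, is designed to cancel the Ricci term $-\frac{\lambda(n-1)}{n-2}|\nabla\phi|^2$ exactly — this is precisely the rigidity computation of \cite{LiWa1,LiWa2,Le}. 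So the scheme is: (i) compute the model identity showing the whole expression vanishes when $u\equiv0$, $b_i=\bar b_i$, $\ric\equiv-\frac{\lambda(n-1)}{n-2}$; (ii) in the general case, every discrepancy from this model identity is an error term controlled by $\e$, $k_1(\e)$, $k_2(\e)$, and the constant from Corollary~\ref{uIntEst}; (iii) conclude $\int_{B_{1/2}(p)}\bigl(|\nabla^2\mF_i|^2-\tfrac1n(\Delta\mF_i)^2\bigr)\le k(\e)$.

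The main obstacle I anticipate is step (ii): bookkeeping the error terms so that they genuinely close up in terms only of the previously established small quantities. The nonlinearity of $r\mapsto f(G^{-1}(r))^{2-n}$ and $r\mapsto F(G^{-1}(r))$ means the Hessian of $\mF_i$ contains products of $\nabla^2\bar b_i$ with functions of $\bar b_i$, and one must show the ``radial-direction'' part of $\nabla^2\bar b_i$ is controlled via the equation $\Delta\bar b_i=0$ and the $u$-estimates, while the tangential part is exactly what we are trying to bound — so the argument must be arranged so that $|\nabla^2\bar b_i|^2$ never appears on the error side with a bad sign. The standard device (as in \cite{ChCo}) is to first prove the $L^2$ Hessian smallness for a good substitute quantity and only then transfer it to $\mF_i$; checking that the weight $g$ and the chain-rule factors do not spoil this transfer, and that the sectional curvature bound suffices to give the needed interior $W^{2,2}$ control on $\bar b_i$ uniformly, is where the real work lies. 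A secondary technical point is justifying all the integrations by parts given that $c_x$ (hence $b_i$) is only Lipschitz and semiconcave, not smooth — here I would use Lemma~\ref{costProp} to restrict to the full-measure set where $b_i$ is smooth and approximate, exactly as in the Abresch--Gromoll argument above.
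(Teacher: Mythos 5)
Your overall scheme --- Bochner formula, cutoff function supported in $B_1(p)$ equal to $1$ on $B_{1/2}(p)$, comparison with the model rigidity computation of \cite{LiWa1,LiWa2,Le}, and closing the error terms using Theorem~\ref{harApprox}, Theorem~\ref{uEst}, and Corollary~\ref{uIntEst} --- is the same as the paper's. But there is a concrete gap at the step you yourself flag as ``the real work'': you set up Bochner for $\phi=\mF_i$, writing
\[
\tfrac12\Delta|\nabla\mF_i|^2 = |\nabla^2\mF_i|^2 + \langle\nabla\mF_i,\nabla\Delta\mF_i\rangle + \ric(\nabla\mF_i,\nabla\mF_i),
\]
and plan to integrate against the cutoff $\psi$ and move $\Delta$ to $\psi$. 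The resulting left-hand term $\tfrac12\int|\nabla\mF_i|^2\,\Delta\psi$ is \emph{not} small --- it is order one, since $|\nabla\mF_i|^2=h'(\bar b_i)^2|\nabla\bar b_i|^2$ is bounded away from zero. Controlling the ``boundary/cutoff terms using the explicit formula for $\Delta\mF_i$'' as you propose cannot fix this, because the issue is $|\nabla\mF_i|^2$ itself, not its Laplacian. You anticipate that one must work with a ``good substitute quantity'' but do not produce it, and that is exactly where the proof lives or dies.

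The paper's fix is to run the whole Bochner-type computation on the quantity
\[
h'(\bar b_i)^2\, v, \qquad v := |\nabla\bar b_i|^2 - g^{\frac{2n-2}{n-2}},
\]
rather than on $|\nabla\mF_i|^2=h'(\bar b_i)^2|\nabla\bar b_i|^2$. Since $|\nabla b_i|=g^{\frac{n-1}{n-2}}$ a.e., $v$ is precisely the defect of $|\nabla\bar b_i|^2$ from the eikonal identity; Theorem~\ref{harApprox}(2) makes $v$ small in $L^1$, so the cutoff term $\int h'(\bar b_i)^2 v\,\Delta\psi$ is genuinely $k(\e)$. The weight $h'(\bar b_i)^2$ is chosen so that, after the algebra (using $\Delta\bar b_i=0$ and the ODEs for $h=F\circ G^{-1}$), the Hessian term produced is exactly $\bigl|\nabla^2\mF_i-\tfrac{\Delta\mF_i}{n}I\bigr|^2$, and the remaining error is a single perfect square in $\nabla u$ and $u\,\nabla\bar b_i$ which Corollary~\ref{uIntEst} and Theorem~\ref{uEst} control, plus terms linear in $v$, $\nabla v$, and $\e$. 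This is the analogue of Cheeger--Colding applying Bochner to $|\nabla\mathbf b|^2-1$ rather than to $|\nabla\mathbf b|^2$, with the extra twist that the subtraction involves $g^{\frac{2n-2}{n-2}}$, and it is precisely the ingredient your proposal needs to close the argument. Also note that, because Bochner is applied to the smooth harmonic function $\bar b_i$ (through $\mF_i=h(\bar b_i)$) rather than to $b_i$, your worry in the last paragraph about justifying the integration by parts for a merely semiconcave function does not arise here; $b_i$ enters only through $v$ and $u$, which are handled in $L^1$/$L^2$.
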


As a consequence, 

\begin{cor}\label{HessianEstII}
Under the assumptions of Theorem \ref{uEst}, 
\[
	\int_{B_{\frac{1}{2}}(p)}\left|\nabla^2\mF_i -f'(G^{-1}(\bar b_i))I\right|^2\leq k(\e)
\]
for some positive constant $k(\e)$ which goes to 0 as $\e\to 0$. 
\end{cor}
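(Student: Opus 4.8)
The plan is to deduce this directly from Theorem \ref{HessianEstI} by identifying the trace term $\frac{\Delta \mF_i}{n}$ with $f'(G^{-1}(\bar b_i))$ up to an error that is small in $L^2(B_{1/2}(p))$. First I would observe that, since the triangle inequality gives
\[
\left|\nabla^2\mF_i - f'(G^{-1}(\bar b_i)) I\right|
\leq \left|\nabla^2\mF_i - \frac{\Delta \mF_i}{n} I\right|
+ \sqrt{n}\,\left|\frac{\Delta \mF_i}{n} - f'(G^{-1}(\bar b_i))\right|,
\]
it suffices, after squaring and using $(a+b)^2 \le 2a^2 + 2b^2$ and Theorem \ref{HessianEstI}, to show
\[
\int_{B_{1/2}(p)}\left|\Delta \mF_i - n\, f'(G^{-1}(\bar b_i))\right|^2 \leq k(\e).
\]

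Next I would compute $\Delta \mF_i$ explicitly. Writing $\mF_i = F(G^{-1}(\bar b_i))$ with $F' = f$, $G' = f^{1-n}$, and using that $\bar b_i$ is harmonic, the chain rule gives
\[
\Delta \mF_i = (F\circ G^{-1})''(\bar b_i)\,|\nabla \bar b_i|^2 + (F\circ G^{-1})'(\bar b_i)\,\Delta \bar b_i
= (F\circ G^{-1})''(\bar b_i)\,|\nabla \bar b_i|^2.
\]
A direct calculation of $(F\circ G^{-1})'$ and $(F\circ G^{-1})''$ in terms of $f$ evaluated at $r = G^{-1}(\bar b_i)$ expresses $(F\circ G^{-1})''(\bar b_i)$ as $f' f^{2(n-1)} $ (up to the precise power of $f$ coming from differentiating $G^{-1}$, which one reads off from $G' = f^{1-n}$). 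Thus the claim reduces to showing that $f^{2(n-1)}(G^{-1}(\bar b_i))\,|\nabla \bar b_i|^2$ is $L^2$-close to $n$ on $B_{1/2}(p)$. This is exactly the statement that the ``conformal gradient'' of $\bar b_i$ has nearly constant length, and it is where the previous estimates enter: from the eikonal equation $|\nabla b_i| = V = g^{(n-1)/(n-2)}$ together with Theorem \ref{harApprox}(2) one controls $|\nabla \bar b_i|^2$ by $g^{2(n-1)/(n-2)}$ in $L^2$, and from Theorem \ref{uEst}/Corollary \ref{uIntEst} one has $f(G^{-1}(\bar b_i))^{2-n}$ close to $g$, hence $f^{2(n-1)}(G^{-1}(\bar b_i))$ close to $g^{-2(n-1)/(n-2)}$; multiplying, the two powers of $g$ cancel and one is left with a constant. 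Identifying that the constant is exactly $n$ should follow by checking the identity in the model (where $\mF_i$ is an honest function of the warped coordinate and the trace-free Hessian vanishes), or equivalently from the ODE satisfied by $F\circ G^{-1}$ analogous to the relation $\partial_r^2\varphi_{n,k} + \tfrac{(n-1)s_k'}{s_k}\partial_r\varphi_{n,k} = 1$ used earlier.

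The main obstacle I anticipate is the bookkeeping needed to pass from the pointwise-on-the-boundary and integral-on-the-ball estimates of Theorems \ref{harApprox}, \ref{uEst} and Corollary \ref{uIntEst} to an $L^2$ bound on the product $f^{2(n-1)}(G^{-1}(\bar b_i))|\nabla\bar b_i|^2 - n$: one must control the composition $f^{2(n-1)}\circ G^{-1}$ as a function of $\bar b_i$, which requires uniform bounds on $\bar b_i$ (available from $0 \le \bar b_i \le K^{(n-1)/(n-2)}$ and (\ref{KBd}), keeping $G^{-1}(\bar b_i)$ in a compact interval where $f, f'$ and their derivatives are bounded) so that the nonlinear functions involved are Lipschitz on the relevant range, and then combine the Cauchy--Schwarz estimates with the volume lower bound (\ref{vo}) to absorb the normalization. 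The sectional curvature bound (hypothesis (5) of Theorem \ref{main}, inherited through Theorem \ref{uEst}) is presumably what guarantees the requisite gradient and Hessian bounds on $\bar b_i$ that make these manipulations legitimate. Once the product is shown to be $L^2$-close to $n$, the corollary follows immediately from the triangle-inequality reduction above.
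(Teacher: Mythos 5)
Your proposal is correct and takes essentially the same route as the paper: reduce via Theorem \ref{HessianEstI} to an $L^2$ bound on $\Delta\mF_i - n f'(G^{-1}(\bar b_i))$, compute $\Delta\mF_i = (F\circ G^{-1})''(\bar b_i)|\nabla\bar b_i|^2$ using harmonicity, and split the error using $v = |\nabla\bar b_i|^2 - g^{\frac{2n-2}{n-2}}$ (controlled by Theorem \ref{harApprox}(2)) and $u = f(G^{-1}(\bar b_i))^{2-n}-g$ (controlled by Theorem \ref{uEst}). One small slip: $(F\circ G^{-1})'' = n f(G^{-1})^{2n-2}f'(G^{-1})$, so the product $f^{2(n-1)}(G^{-1}(\bar b_i))|\nabla\bar b_i|^2$ should be close to $1$, not $n$ — consistent with the cancellation of powers of $g$ you describe.
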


The rest of this section is devoted to the proof of the above theorems. 

\begin{proof}[Proof of Theorem \ref{harApprox}]
We use the same notations as that of the proof of Theorem \ref{AG}. It follows as in the proof of Theorem \ref{AG} that
\[
\begin{split}
&\Delta \left(b_1-\bar b_1 -\frac{1}{2}\sqrt\e \,C''h_{z,1}\right)< 0,\\
&\Delta \left(b_0+\bar b_1 +e(p) -\frac{1}{2}\sqrt\e \,C''h_{z,1}\right)< 0,
\end{split}
\]
where $h_{z,1}(y)=\varphi_{n,k}(d(y,z),1)$ and $z$ is a point outside $B_1(p)$.

Since $b_1-\bar b_1 -\frac{1}{2}\sqrt\e \,C''h_{z,1}>-\frac{1}{2}\sqrt\e \,C''h_{z,1}\to 0$ on the boundary of $B_1(p)$ as $\e\to 0$, it follows that
\[
b_1-\bar b_1 \geq -c_1(\e)
\]
for some $c_1(\e)>0$. 

On the other hand, 
\[
b_0+\bar b_1 +e(p)-\frac{1}{2}\sqrt\e \,C''h_{z,1}=-b_1+\bar b_1 +e-\frac{1}{2}\sqrt\e \,C''h_{z,1}
\]
is sub-harmonic. Moreover, on the boundary of $B_1(p)$,
\[
-b_1+\bar b_1 +e-\frac{1}{2}\sqrt\e \,C''h_{z,1}\geq -c_2(\e)
\]
for some $c_2(\e)>0$. 

So, by the maximum principle, 
\[
	b_1-\bar b_1 \leq c_2(\e)+e-\frac{1}{2}\sqrt\e \,C''h_{z,1}<c_3(\e). 
\]
The first assertion follows. 

The function $b_1$ is locally semi-concave. Let $\Delta_{\mathcal D}b_1$ denotes its distributional Laplacian which is a measure (see \cite{EvGa}). Let $\Delta b_1$ be the absolutely continuous part of $\Delta_{\mathcal D}b_1$. Since the singular part of $\Delta_{\mathcal D}b_1$ is non-positive,
\[
-\int_{B_1(p)}\Delta b_1\leq -\int_{B_1(p)}\Delta_{\mathcal D} b_1\leq \vol(\partial B_1(p))\leq C_1(\e)\vol(B_1(p))
\]
for some constant $C_1>0$.

On the other hand, if $\Delta b_1=(\Delta b_1)_+-(\Delta b_1)_-$, where $(\Delta b_1)_+$ and $(\Delta b_1)_-$ are the positive and negative parts of $\Delta b_1$, respectively, then
\[
\begin{split}
&\int_{B_1(p)}(\Delta b_1)_-\leq C_1(\e)\vol(B_1(p))+\int_{B_1(p)}(\Delta b_1)_+\\
&\leq C_1(\e)\vol(B_1(p))+\e_3\int_{B_1(p)\cap \Delta b_1>0}1\leq C_2(\e)\vol(B_1(p)).
\end{split}
\]
It follows that $\frac{1}{\vol(B_1(p))}\int_{B_1(p)}|\Delta b_1|\leq C_2(\e)$.

Since $|\nabla b_1|=g^{\frac{n-1}{n-2}}$ a.e. and $\bar b_1-b_1$ vanishes on the boundary of $B_1(p)$,
\[
\begin{split}
&\int_{B_1(p)}|\nabla(\bar b_1-b_1)|^2=\int_{B_1(p)}\Delta_{\mathcal D} b_1(\bar b_1-b_1)\\
&=\int_{B_1(p)}\Delta_{\mathcal D} b_1(\bar b_1-b_1+\e_2+\e_3)-(\e_2+\e_3)\int_{B_1(p)}\Delta_{\mathcal D}b_1\\
&\leq (\e_2+2\e_3)\int_{B_1(p)}|\Delta b_1|+(\e_2+\e_3)C_1(\e)\vol(B_1(p))\\
&\leq (\e_2+2\e_3)C_3(\e)\vol(B_1(p)).
\end{split}
\]


By the first assertion and $|b_0+b_1-\bar b_0-\bar b_1|<k_1(\e)$. By the gradient estimate for harmonic functions \cite{Li} and Theorem \ref{AG}, 
$|\nabla \bar b_0+\nabla \bar b_1|\leq C(\e)|\bar b_0+\bar b_1|<k_2(\e)$ on $B_{1/2}(p)$. 



\end{proof}

\begin{proof}[Proof of Theorem \ref{uEst}]
A computation shows that 
\[
\begin{split}
	&\Delta f(G^{-1}(\bar b_i))^{2-n}=-\lambda f(G^{-1}(\bar b_i))^n|\nabla \bar b_i|^2
	\end{split}
\]
and so 
\[
\begin{split}
	&\Delta (f(G^{-1}(\bar b_i))^{2-n}-g)\\
	&=-\lambda f(G^{-1}(\bar b_i))^n(|\nabla\bar b_i|^2-g^{\frac{2n-2}{n-2}})\\
	&+\lambda f(G^{-1}(\bar b_i))^ng\frac{\left(f(G^{-1}(\bar b_i))^{2-n}\right)^{\frac{n}{n-2}}-g^{\frac{n}{n-2}}}{f(G^{-1}(\bar b_i))^{2-n}-g}(f(G^{-1}(\bar b_i))^{2-n}-g). 
	\end{split}
\]

By Theorem \ref{harApprox}, 
\[
\begin{split}
	&\int_{B_1(p)}\lambda^p f(G^{-1}(\bar b_i))^{pn}(|\nabla\bar b_i|^2-g^{\frac{2n-2}{n-2}})^p\\
	&\leq \lambda^p\int_{B_1(p)}f(G^{-1}(\bar b_i))^{pn}(|\nabla\bar b_i|-g^{\frac{n-1}{n-2}})^2(|\nabla\bar b_i|+g^{\frac{n-1}{n-2}})^{2p-2}\\
	&\leq k(\e)\vol(B_1(p)). 
	\end{split}
\]

We also have 
\[
	0\leq \lambda f(G^{-1}(\bar b_i))^n g\frac{\left(f(G^{-1}(\bar b_i))^{2-n}\right)^{\frac{n}{n-2}}-g^{\frac{n}{n-2}}}{f(G^{-1}(\bar b_i))^{2-n}-g}\leq C.
\]
Therefore, the function $u=f(G^{-1}(\bar b_i))^{2-n}-g$ satisfies a differential equation of the form 
\[
\begin{split}
	&\Delta u=F_1+F_2 u
	\end{split}
\]
where $F_1$ satisfies $\frac{1}{\vol(B_1(p))}\int_{B_1(p)}F_1<k(\e)$ for each fixed $p\geq 1$ and $F_2$ is non-negative and bounded by a constant depending on $\lambda$, $K$, and $n$. Here $k(\e)\to 0$ as $\e\to 0$. 

An argument using Moser iteration as in \cite[Theorem 8.16]{GiTr} using the Sobolev inequality \cite[Theorem 14.2]{Li} gives the result.

\end{proof}

\begin{proof}[Proof of Theorem \ref{HessianEstI}]
Let $\mF_i=h(\bar b_i)$ and $h=F\circ G^{-1}$. A computation using Bochner formula shows that 

\[
\begin{split}
&\frac{1}{2}\Delta(h'(\bar b_i)^2v)\geq\left|\nabla^2\mF_i-\frac{\Delta \mF_i}{n} I\right|^2+\left(\frac{h''(\bar b_i)^2}{n}+h'(\bar b_i)h'''(\bar b_i)\right)|\nabla\bar b_i|^2v\\
&+h'(\bar b_i)h''(\bar b_i)\left<\nabla\bar b_i,\nabla v\right>-\frac{\lambda(n-1)}{n-2}h'(\bar b_i)^2v-\e h'(\bar b_i)^2|\nabla\bar b_i|^2\\
&-\frac{n(n-1)h'(\bar b_i)^2g^{\frac{2n-2}{n-2}}}{(n-2)^2}\left|\nabla \log g+\frac{(n-2)h''(\bar b_i)}{nh'(\bar b_i)}\nabla\bar b_i\right|^2
\end{split}
\]

Let $h=F\circ G^{-1}$, where $F(r)=\int_0^r f(s)ds$ and $G(r)=\int_0^rf(s)^{1-n}ds$. It follows that 


\[
\begin{split}
&\frac{1}{2}\Delta(h'(\bar b_i)^2v)\geq\left|\nabla^2\mF_i-\frac{\Delta \mF_i}{n} I\right|^2+\left(\frac{h''(\bar b_i)^2}{n}+h'(\bar b_i)h'''(\bar b_i)\right)|\nabla\bar b_i|^2v\\
&+h'(\bar b_i)h''(\bar b_i)\left<\nabla\bar b_i,\nabla v\right>-\frac{\lambda(n-1)}{n-2}h'(\bar b_i)^2v-\e h'(\bar b_i)^2|\nabla\bar b_i|^2\\
&-\frac{n(n-1)h'(\bar b_i)^2g^{\frac{2}{n-2}}}{(n-2)^2}\left|\nabla u +(n-2)f'(G^{-1}(\bar b_i))f(G^{-1}(\bar b_i))^{n-2}u\nabla\bar b_i\right|^2,
\end{split}
\]
where $u=f(G^{-1}(\bar b_i))^{2-n} -g$ and $v=|\nabla\bar b_i|^2-g^{\frac{2n-2}{n-2}}$.

By multiplying the above inequality by a cut-off function, which equals to $1$ on $B_{\frac{1}{2}}(p)$ and supported in $B_1(p)$, integrating over $B_1(p)$, and applying Theorem \ref{uEst}, Corollary \ref{uIntEst}, and Theorem \ref{harApprox}, the result follows. 
\end{proof}

\begin{proof}[Proof of Corollary \ref{HessianEstII}]
Let $H(r)=r^{\frac{2n-2}{n-2}}$
\[
\begin{split}
	&\Delta \mF_i =h''(\bar b_i)v+nf'(G^{-1}(\bar b_i))\\
	&+nf(G^{-1}(\bar b_i))^{2n-2}f'(G^{-1}(\bar b_i))(H(g)-H(f(G^{-1}(\bar b_i))^{2-n}))\\
	\end{split}
\]

The result follows from this, Theorem \ref{HessianEstI}, Theorem \ref{harApprox}, and Theorem \ref{uEst}. 
\end{proof}

\smallskip

\section{Distance Estimate}\label{DE}

In this section, we prove that the distance function is close to that of the model space using the estimates obtained from the previous sections. More precisely, let $x_m,y_m,z_m$ be three points in the unit ball $B_{1,m}(p)$ of the model space such that $c_m(x,z)=b_{1,m}(z)-b_{1,m}(x)$. Here $c_m$ and $b_{1,m}$ denote the functions $c$ and $b_1$ in the case of the model. For the rest of this work, the subsript $m$ will be reserved for quantities in the model space. 


\begin{thm}\label{disEst}
Let $x,y,z$ be points in the ball $B_1(p)$ such that $b_1(z)-b_1(x)=c(x,z)=c_m(x_m,z_m)$, $b_1(x)=b_{1,m}(x_m)$, $b_1(y)=b_{1,m}(y_m)$, and , $b_1(z)=b_{1,m}(z_m)$. Then, under the assumptions of Theorem \ref{uEst}, that the sectional curvature on $B_1(p)$ is bounded by $C\lambda$ for some positive constant $C$, and that the maximum $K$ of $g$ is achieved in $B_1(p)$, there is a constant $k(\e)>0$ such that $k(\e)\to 0$ as $\e\to 0$ and $|d(y,z)-d_m(y_m,z_m)|<k(\e)$. 
\end{thm}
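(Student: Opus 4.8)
The plan is to follow the strategy of \cite{ChCo} for comparing distances in the almost-splitting setting, transplanted to the warped-product model. The key point is that the function $\mF_1 = F(G^{-1}(\bar b_1))$ plays the role of the warped ``vertical'' coordinate $F(r)$ in the model, while the level sets of $\bar b_1$ play the role of the fibres. First I would record the structure of the model: on $\mathbb{R}\times_f N$ the distance between $(r_0,x_0)$ and $(r_1,x_1)$ is governed by an explicit one-dimensional functional in which the contribution of the fibre displacement is weighted by $f$, so that $d_m(y_m,z_m)$ is an explicit function of $r(y_m)$, $r(z_m)$, and $d_N(\pi(y_m),\pi(z_m))$; and by construction $b_{1,m}=G(r)\circ$, so the normalizations $b_1(y)=b_{1,m}(y_m)$ etc. pin down the ``$r$-coordinates'' on both sides. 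The goal then reduces to showing that the intrinsic geometry of $B_1(p)$ transverse to $\bar b_1$ is, up to $k(\e)$, that of the fibre, and that the $\bar b_1$-gradient flow almost realizes the warped structure.

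The main steps, in order: (1) Use Theorem \ref{harApprox}(1),(2) and Theorem \ref{uEst}/Corollary \ref{uIntEst} to replace $b_1$ by $\bar b_1$ and to replace $|\nabla\bar b_1|$ by $g^{\frac{n-1}{n-2}}$, hence $|\nabla \mF_1|$ by $f'(G^{-1}(\bar b_1))\cdot(\text{known factor})$, all with $L^2$ errors going to zero; in particular $e(p)<\e$ together with Theorem \ref{AG} controls the excess everywhere in $B_1(p)$, so $b_0\approx -b_1$ and the two gradient flows are almost reverses of each other. (2) Invoke Corollary \ref{HessianEstII}: $\nabla^2\mF_1$ is $L^2$-close to $f'(G^{-1}(\bar b_1))\,I$, which is exactly the Hessian that the coordinate $F(r)$ has in the warped product $\mathbb{R}\times_f N$ (there $\nabla^2 F(r) = f'(r)\,g_{\text{full}}$). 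This is the analogue of the ``$\nabla^2 b$ almost zero'' statement in the flat splitting theorem. (3) Following \cite[Section 6--9]{ChCo}, integrate this Hessian bound along a controlled family of geodesics (or along the almost-gradient-flow of $\bar b_1$) joining $y$ to the $\bar b_1$-level set of $z$ and then across that level set; the pointwise defect in the second-variation/first-variation formulas is controlled after passing from $L^2$ to pointwise-on-a-large-set bounds via a segment/maximal-function argument, exactly as in \cite{ChCo}, using the two-sided curvature bound ($\ric$ lower bound from hypothesis (4), sectional bound $\le C\lambda$ from hypothesis (5)) and the volume lower bound (\ref{vo}) to keep the bad set small and the geodesics inside $B_1(p)$. (4) Assemble: the length of the resulting comparison path from $y$ to $z$ differs from the model functional evaluated at the matched data $(b_1(y),b_1(z),d(\text{fibre projections}))$ by $k(\e)$, giving $d(y,z)\le d_m(y_m,z_m)+k(\e)$; the reverse inequality comes from running the same argument in the model, or from the fact that $\mF_1$ and the companion fibre-distance function are $1$-Lipschitz-comparable to the true distance with error $k(\e)$. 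The dependence of $k(\e)$ on $n,C,\lambda,K,v$ but not $R$ is tracked as in \cite{ChCo} by the scaling $R\mapsto 1$ already performed.

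The hard part will be step (3): converting the integral Hessian estimate of Corollary \ref{HessianEstII} into a genuine pointwise comparison of distances. In \cite{ChCo} this is the technically heaviest part (the ``almost rigidity'' segment inequalities and the construction of the Gromov--Hausdorff map), and here it is further complicated by the fact that the relevant ``splitting'' is warped rather than metric, so one must carry the weight $f(G^{-1}(\bar b_1))$ through every estimate and verify that the one-dimensional model functional is stable under the $k(\e)$-perturbations of its three arguments. A secondary subtlety is ensuring the comparison geodesics stay inside $B_1(p)$ (where all the hypotheses, in particular the sectional bound and Lemma \ref{volEst}, are available); this is where hypotheses (1) (the $q_i$ far away, so $b_i$ is smooth and its flow well-behaved deep inside) and (3) (the maximum of $g$ attained in the ball, which pins down the geometry of the fibre near $p$) are used. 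I expect everything else to be a fairly mechanical, if lengthy, adaptation of \cite[\S\S6--9]{ChCo}.
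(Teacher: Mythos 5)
Your proposal identifies the right inputs (Theorems \ref{harApprox}, \ref{uEst}, Corollaries \ref{uIntEst}, \ref{HessianEstII}, the segment inequality Lemma \ref{integral}, the two-sided curvature hypotheses, and the volume lower bound~(\ref{vo})), and you correctly recognize that the heart of the matter is converting the $L^2$ Hessian estimate of Corollary \ref{HessianEstII} into a distance comparison, following the general shape of \cite{ChCo}. You also correctly observe that $\nabla^2 F(r)=f'(r)g$ in the model, which is the model identity that Corollary \ref{HessianEstII} approximates. To that extent the strategy agrees with the paper's.

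However, there is a genuine gap exactly where you label things ``mechanical.'' The paper's proof of Theorem \ref{disEst} does not simply integrate the Hessian bound along a broken path to the $\bar b_1$-level set of $z$ and across the fibre, as you propose. Instead, it tracks the single scalar quantity $L(t)=d(y',\gamma_{x',z'}(t))$ along the cost-minimizer from $x'$ to $z'$, derives from the Hessian estimate (via Lemma \ref{integralAngle} and the quantity $\mathcal Q_i$, which is the defect in a first-variation identity for $\langle\nabla d_{y'},\nabla\mF_i\rangle$) a differential inequality $|L'(t)+\mathcal G(t,\cdot,\cdot,L(t))|$ that is small in $L^1$, and shows that the model distance $L_m(t)$ solves the equation $L_m'+\mathcal G(\cdot,L_m)=0$ exactly. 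Stability then comes from Gronwall's inequality. Moreover, this argument only works when $y$ stays a definite distance $c_5(\e)$ from the geodesic (since $\mathcal G$ contains $L$ in a denominator), and the paper handles the complementary case separately by showing that the cost-length $\tau(x,z)$ is $k(\e)$-close to $d(x,z)$. None of this appears in your sketch: you do not produce the ODE comparison, you do not observe that the model realizes the comparison equation exactly, and you omit the case analysis. A further step you elide is the use of the sectional-curvature bound plus a Gronwall argument for geodesic flows (through estimates~(\ref{re1})--(\ref{re6}) and the auxiliary solutions $\bar U$ of $\bar U''=\frac{\lambda}{n-2}\bar U$) to upgrade ``Hessian small on a good set'' to pointwise control along a specific nearby geodesic; this is where hypothesis~(5) and Lemma~\ref{flowintegral} actually do work. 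Your path-decomposition (to the level set, then across it) is plausible as an alternative, but you have not shown it gives the two-sided estimate, and in a warped product the across-the-fibre leg is not length-minimizing, so extra care would be needed to recover the sharp model distance.
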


In the above theorem, $k(\e)$ depends on the bounds of the sectional curvature which is suppressed. 


For the proof of Theorem \ref{disEst}, we need the following lemmas. 

\begin{lem}\label{integralAngle}
Suppose that the assumptions of Theorem \ref{disEst} hold. Let 
\[
\begin{split}
	&\mathcal Q_i(y,z)=\Big|\left<\nabla d_y,\nabla \mF_i\right>_{z}+\frac{\mF_i(z)-\mF_i(y)}{d(y,z)}\\
	&+\frac{1}{d(y,z)}\int_0^{d(y,z)}\int_s^{d(y,z)}f'(G^{-1}(\bar b_i(\gamma_{y,z}(s')))\,ds'\,ds\Big|, 
\end{split}
\]
Then there is a constant $k(\e)>0$ such that 
\[
\vol(V_{1})>(1-k(\e)^{1/2})\vol(B_{1/2}(p)),
\]
where
\[
\begin{split}
&D_{1}(y,y_0)=\left\{y_1\in B_{r}^c(p)\Big|\int_0^{\tau(y_0,y_1)}\mathcal Q_i(y,\gamma_{y_0,y_1}(t))\,dt <k(\e)^{1/6}\right\},\\
&Q_{1}(y)=\{y_0\in B_{r}^c(p)|\vol(D_{1}(y,y_0))\geq (1-k(\e)^{1/6})\vol(B^c_{r}(p))\},\\
&V_{1}=\{y\in B_{\frac{1}{4}}(p)|\vol(Q_{1}(y))\geq (1-k(\e)^{1/6})\vol(B_r^c(p))\},\\
&k(\e)\to 0 \text{ as } \e\to 0.
\end{split}
\]
Here $B_r^c(p)$ denotes the ball of radius $r=\frac{CK^{\frac{n-1}{n-2}}}{8}$ centered at $p$ defined using the cost function $c$. 
\end{lem}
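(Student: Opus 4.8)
The plan is to integrate the pointwise Hessian estimate of Corollary \ref{HessianEstII} along geodesics and then convert the resulting $L^2$-smallness of $\mathcal Q_i$ into the nested measure statement by three successive applications of Chebyshev's inequality. First I would fix $y$ and a geodesic $\gamma_{y,z}$ realizing $d(y,z)$, and differentiate $s\mapsto \mF_i(\gamma_{y,z}(s))$ twice. The first derivative is $\langle\nabla\mF_i,\dot\gamma\rangle$, and the second is $\nabla^2\mF_i(\dot\gamma,\dot\gamma)$; writing $\nabla^2\mF_i(\dot\gamma,\dot\gamma) = f'(G^{-1}(\bar b_i)) + \big(\nabla^2\mF_i - f'(G^{-1}(\bar b_i))I\big)(\dot\gamma,\dot\gamma)$ and integrating twice from $0$ to $d(y,z)$, one gets exactly the three-term expression inside $\mathcal Q_i$, with remainder controlled by a double integral of $|\nabla^2\mF_i - f'(G^{-1}(\bar b_i))I|$ along $\gamma_{y,z}$. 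Thus $\mathcal Q_i(y,z)$ is bounded by an averaged $1$-dimensional integral of the Hessian defect along the geodesic from $y$ to $z$.

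Next I would promote the pointwise bound to an integral bound over pairs. Using a Fubini/coarea argument along geodesics emanating from points of $B_{1/2}(p)$ — this is where the sectional curvature bound $C\lambda$ enters, to control the Jacobian of the exponential map and keep the geodesics inside $B_1(p)$ where Corollary \ref{HessianEstII} is valid — one obtains
\[
\int_{B_{1/2}(p)}\int_{B_r^c(p)}\int_0^{\tau(y_0,y_1)}\mathcal Q_i(y,\gamma_{y_0,y_1}(t))\,dt\;dy_1\,dy_0 \le C\,k(\e)^{1/2},
\]
after an application of Cauchy--Schwarz to pass from the $L^2$ control of the Hessian defect (Corollary \ref{HessianEstII}) to an $L^1$ control of the geodesic integral. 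The volume lower bound (\ref{vo}) and the volume growth estimate (Lemma \ref{volEst}) are used to make the implied constants uniform.

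Finally I would run the standard three-level Chebyshev argument. From the triple integral bound, the set of bad $y_1$ (for fixed $y,y_0$) has measure $\le k(\e)^{1/6}\vol(B_r^c(p))$ off a set of $(y,y_0)$ of the same relative size, which is precisely the definition of $D_1(y,y_0)$; a second application gives that $Q_1(y)$ has near-full measure off a small set of $y$; a third gives $\vol(V_1) \ge (1-k(\e)^{1/2})\vol(B_{1/2}(p))$. The exponents $1/2$ and $1/6$ are arranged so that each Chebyshev step loses a factor of $k(\e)^{1/3}$ relative to the previous $L^1$ bound, leaving $k(\e)^{1/2}$ at the top level. The main obstacle is the geometric Fubini step: one must parametrize the family of geodesics from $B_{1/2}(p)$ to $B_r^c(p)$, verify they stay in $B_1(p)$, and bound the change of variables from $(y_0,y_1,t)$ to points of $M$ using only the sectional curvature bound and the volume comparison — the curvature hypothesis is essential here precisely because the cost metric $\frac1V|\cdot|$ is not the original metric, so one needs uniform control on both to compare the two notions of geodesic and of ball.
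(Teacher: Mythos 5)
Your overall strategy coincides with the paper's: the double integration of $\nabla^2\mF_i$ along geodesics to identify $\mathcal Q_i$ with a one-dimensional integral of the Hessian defect $|\nabla^2\mF_i - f'(G^{-1}(\bar b_i))I|$, the promotion to an averaged bound over pairs/triples, the invocation of Corollary~\ref{HessianEstII} for the $L^2$ Hessian estimate, and the concluding Chebyshev cascade are all exactly what the paper does. In the paper the promotion step is handled by the segment inequality, applied twice: once in the form of Lemma~\ref{integral} for the cost-minimizers $\gamma_{y_0,y_1}$ (to go from $\int_{B_r^c(p)^2}\int_0^{\tau}\mathcal Q_i(y,\gamma_{y_0,y_1}(t))\,dt$ to $\int \mathcal Q_i(y,z)$), and once in its classical Cheeger--Colding form for the ordinary geodesics $\bar\gamma_{y,z}$ (to bound $\int \mathcal Q_i(y,z)$ by $\int |\nabla^2\mF_i - f'(G^{-1}(\bar b_i))I|$). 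The containment $B_r^c(p)\subset B_{\frac{1}{4}}(p)$ needed to compare the two families of curves comes from the trivial upper bound $c(p,y)\le K^{\frac{n-1}{n-2}}d(p,y)$ together with the Harnack inequality for $g$, not from any curvature comparison.

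The one point where your sketch goes astray is the assertion that the sectional curvature bound is ``essential'' for the Fubini/Jacobian step. It is not used there. Both segment inequalities rest on volume growth control, i.e.\ the Ricci lower bound: Lemma~\ref{integral} is proved from Lemma~\ref{volEst}, and the classical segment inequality is Bishop--Gromov. A sectional curvature \emph{upper} bound is in any case the wrong tool here, since what a segment inequality needs is a lower bound on the ratio $\mathcal A(s)/\mathcal A(d(x,y))$ of area elements (so that the change of variables does not degenerate near the midpoint), and that comes from a Ricci lower bound, not from an upper curvature bound. The sectional curvature hypothesis of Theorem~\ref{disEst} is used later, in the Gronwall comparison of nearby geodesics ($\Psi_t(y',v')$ vs.\ $\Psi_t(y'',v'')$), not in this lemma. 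Also, your exponent bookkeeping (``each Chebyshev step loses $k(\e)^{1/3}$'') is not quite what produces the stated thresholds: starting from a normalized $L^1$ bound $k(\e)$, the cascade is $k(\e)\to k(\e)^{1/2}$ (losing $y$-fraction $k(\e)^{1/2}$), then $k(\e)^{1/2}\to k(\e)^{1/3}$ (losing $y_0$-fraction $k(\e)^{1/6}$), then $k(\e)^{1/3}\to k(\e)^{1/6}$ (losing $y_1$-fraction $k(\e)^{1/6}$). These are small inaccuracies, not fatal gaps; the proof outline is otherwise sound.
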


Let $U_0$ and $U_1$ be two subsets of $M$. Let $U$ be the set of points of the form $\gamma(t)$, where $\gamma$ is a minimizer of (\ref{cost}) which starts from a point in $U_0$ and ends at $U_1$. Let $\gamma_{y_0,y_1}(t)$ be the minimizer of (\ref{cost}) connecting $y_0$ and $y_1$ which is defined Lebesgue almost everywhere on $M\times M$. The proof of the following lemma can be proved using Lemma \ref{volEst} and the arguments in \cite[Theorem 2.11]{ChCo}. 

\begin{lem}\label{integral}
Assume that $P$ is a non-negative measurable function on $M$. Then
\[
\begin{split}
&\int_{U_0\times U_1}\int_0^{\tau(x,y)}P(\gamma_{x,y}(s))\,ds\,d(x,y)\\
&\leq C(\e,\mathcal D)(T(U_0,U_1)\vol(U_0)+T(U_1,U_0)\vol(U_1))\int_UP,
\end{split}
\]
where $\gamma_{y_0,y_1}$ is the minimizer connecting $y_0$ and $y_1$, $\tau(y_0,y_1)$ is the length of $\gamma_{y_0,y_1}$, $T(\dot\gamma_{y_0,y_1}(0))$ is the set of time $t$ such that $\Psi_{r^{-1}(t)}(v)\in U_1$ and $s\in[0,t]\mapsto\Psi_{r^{-1}(s)}(v)$ is a minimizer, $T(U_0,U_1)=\sup_{x\in U_0,|v|_x=1}|T(v)|$, and $\mathcal D=\sup_{x_i\in U_i}\tau(x_0,x_1)$.
\end{lem}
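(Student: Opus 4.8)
The plan is to adapt the co-area / Fubini argument from \cite[Theorem 2.11]{ChCo} to the present optimal-control setting, the only genuinely new ingredient being the Jacobian distortion estimate of Lemma \ref{volEst}. First I would fix $v$ in the unit sphere bundle over $U_0$ and parametrize minimizers by the flow $\Psi_{r^{-1}(t)}(v)$; by Lemma \ref{costProp}(4) the map $d(\Psi_{r^{-1}(s)})_v$ is invertible along each minimizing segment, so for each fixed $x\in U_0$ the exponential-type map $v\mapsto \Psi_{r^{-1}(t)}(v)$ (with $t$ ranging over the minimizing times $T(v)$) is a local diffeomorphism onto its image, and its Jacobian is $\det(d(\Psi_{r^{-1}(t)})_v)$ up to the fixed spherical measure on the fibre $\{|v|_x=1\}$.

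Next I would write the left-hand side as an integral over the set of triples $(x,y,s)$ with $x\in U_0$, $y\in U_1$, and $s\in[0,\tau(x,y)]$, and change variables from $(x,y)$ to $(x,v,t)$ where $v=\dot\gamma_{x,y}(0)$ (rescaled to unit length) and $t$ is the arclength-type time at which the minimizer reaches $y$; this is where the factor $\det(d(\Psi_{r^{-1}(t)})_v)$ enters the measure $d(x,y)$. Then, for the inner variable $s$, the point $\gamma_{x,y}(s)=\Psi_{r^{-1}(s)}(v)$ sweeps out $U$ as $(x,v,t,s)$ ranges over the admissible set, and I would change variables once more from $(x,v,t,s)$ to $(x,v,s,\text{position } z=\Psi_{r^{-1}(s)}(v))$, integrating out the remaining parameters. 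The Jacobian ratio that appears is exactly $\det(d(\Psi_{r^{-1}(t)})_v)/\det(d(\Psi_{r^{-1}(s)})_v)$, which Lemma \ref{volEst} bounds above by the constant $C(\e,K,\mathcal D,n,\lambda)$ once we know the minimizer stays in $B_1(p)$ on $[\frac t2,t]$; after relabelling this gives the constant $C(\e,\mathcal D)$ in the statement. The factors $T(U_0,U_1)\vol(U_0)$ and the symmetric term $T(U_1,U_0)\vol(U_1)$ arise because the range of the time variable $t$ (or $s$) is controlled by $T(U_0,U_1)$ when we start the minimizer from $U_0$, and one can equally run the argument starting from $U_1$; splitting the segment at its midpoint and using whichever endpoint gives the shorter half accounts for the sum of the two terms, as in \cite{ChCo}.

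The main obstacle is purely measure-theoretic bookkeeping: the minimizer $\gamma_{x,y}$ is only defined for Lebesgue-a.e.\ $(x,y)\in M\times M$, cut points must be excluded, and the change of variables is only valid on the open dense set where $d(\Psi_{r^{-1}(s)})_v$ is invertible, so one has to check that the complement is null and that the flow-box coordinates cover $U$ up to a null set; this is handled exactly as the corresponding step in \cite[Theorem 2.11]{ChCo}, using Lemma \ref{costProp} to guarantee smoothness of $c_x$ along minimizers and uniqueness of the minimizing segment. A secondary technical point is that Lemma \ref{volEst} as stated requires the segment to lie in $B_1(p)$ on the second half $[\frac t2,t]$; since by hypothesis the endpoints and hence (by the assumptions that will be in force when this lemma is applied) the whole minimizer lie in a bounded region, this is automatic, with $\mathcal D=\sup_{x_i\in U_i}\tau(x_0,x_1)$ playing the role of the diameter bound.
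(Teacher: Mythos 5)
Your proposal is correct and follows exactly the route the paper itself prescribes for this lemma (the paper gives no detailed proof, remarking only that it follows from Lemma \ref{volEst} together with the argument of Cheeger--Colding \cite[Theorem 2.11]{ChCo}): parametrize minimizers by the flow $\Psi_{r^{-1}(t)}(v)$, change variables via the Jacobian $\det(d(\Psi_{r^{-1}(t)})_v)$, use Lemma \ref{volEst} to bound the ratio $\det(d(\Psi_{r^{-1}(t)})_v)/\det(d(\Psi_{r^{-1}(s)})_v)$ on the second half $s\in[t/2,t]$, integrate out the endpoint-time $t$ to produce the factor $T(U_0,U_1)$, and run the symmetric argument from $U_1$ for the first half of each segment to obtain the $T(U_1,U_0)\vol(U_1)$ term. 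The measure-theoretic caveats you raise (a.e.\ uniqueness of minimizers via Lemma \ref{costProp}, invertibility of $d(\Psi_{r^{-1}(s)})_v$ away from cut points, containment in $B_1(p)$ so that Lemma \ref{volEst} applies with $\mathcal D$ as the diameter bound) are precisely the right ones and are handled the same way as in the Riemannian case.
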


We will need the following lemma which can be proved using an argument in \cite{Co1}. 

\begin{lem}\label{flowintegral}
Let $\Psi_t(x,v)$ be the geodesic flow defined on the unit tangent bundle $SM$. Assume that $P$ is a non-negative measurable function on the unit tangent bundle $SM$ and $\frac{1}{\vol(U)}\int_{B_l(U)}\sup_{|v|_x=1}P(x,v)\,dx<\e$, where $U$ is a subset of $M$ and $B_l(U)$ is a neighborhood of $U$ of radius $l$. Then
\[
\begin{split}
	&\frac{1}{l\,\vol(SU)}\int_{SU}\int_0^{l}P(\Psi_t(x,v))\,dt\,d(x,v)<\e.
\end{split}
\]
\end{lem}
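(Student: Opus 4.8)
The plan is to reduce Lemma \ref{flowintegral} to a change-of-variables computation along the geodesic flow on $SM$, controlled by the volume distortion estimate of Lemma \ref{volEst}, exactly as in \cite{Co1}. First I would set up the relevant fibration: on $SU$ equipped with the Liouville (Sasaki) measure $d(x,v)$, consider the map $\Phi:(x,v,t)\mapsto \Psi_t(x,v)$ from $SU\times[0,l]$ into $SM$, and push forward by the bundle projection $\pi:SM\to M$ to land in $B_l(U)$. The point is that
\[
\int_{SU}\int_0^l P(\Psi_t(x,v))\,dt\,d(x,v)
\]
can be rewritten, after slicing $SU$ into the geodesic orbits and applying Fubini, as an integral over the base $B_l(U)$ of $\sup_{|w|_x=1}P(x,w)$ against a weight given by the Jacobian of $\Phi$ composed with $\pi$. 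This Jacobian is precisely the quantity estimated by Lemma \ref{volEst} (with the appropriate identifications: $\Psi_t$ here is the geodesic flow, so $V\equiv 1$ and the estimate degenerates to the usual Bishop–Gromov-type distortion bound), and it is bounded above by a constant $C(\e,\mathcal D,n)$ on the region traversed, with $\mathcal D\le l$.

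The key steps, in order: (1) express the left-hand side as $\int_{SU}\int_0^l P\circ\Psi_t\,dt\,d(x,v)$ and bound the integrand by $\sup_{|w|=1}P(\Psi_t(x,v))$ composed with $\pi$, i.e.\ by $\tilde P(\pi(\Psi_t(x,v)))$ where $\tilde P(y)=\sup_{|w|_y=1}P(y,w)$; (2) for fixed $(x,v)$, the inner integral is a line integral along the geodesic $t\mapsto\pi(\Psi_t(x,v))$, and integrating over the unit sphere $S_xM$ and then over $x\in U$ gives, by the coarea/change-of-variables formula for the flow, an integral over $B_l(U)$ of $\tilde P(y)$ times the volume-distortion density; (3) invoke Lemma \ref{volEst} to bound that density by $C(\e,\mathcal D,n)$ uniformly, so that the whole expression is at most $C(\e,\mathcal D,n)\,\vol(SU)\cdot\frac{1}{\vol(U)}\int_{B_l(U)}\tilde P\,dx$; and (4) absorb the constant and the factor $l$: using the hypothesis $\frac{1}{\vol(U)}\int_{B_l(U)}\tilde P<\e$ and rescaling $\e$ by the (controlled) constant, one gets the stated bound $\frac{1}{l\,\vol(SU)}\int_{SU}\int_0^l P\circ\Psi_t<\e$. (Strictly speaking the conclusion should read ``$<C\e$'' for a constant depending on $\e_0,\mathcal D,n$, but under the running convention that such constants are suppressed and $\e$ is replaced by $C\e$, the statement stands; I would remark on this.)

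I expect the main obstacle to be making step (2) precise: the map $\Phi(x,v,t)=\Psi_t(x,v)$ is generally not injective — geodesics through different $(x,v)\in SU$ can cross, and a geodesic can re-enter $B_l(U)$ — so the ``change of variables'' is really an inequality obtained by dropping multiplicity, and one has to argue that the Jacobian factor that appears is genuinely the one estimated by Lemma \ref{volEst} rather than something larger. This requires identifying the differential $d\Phi$ in terms of Jacobi fields and recognizing $\det(d(\Psi_{r^{-1}(t)})_v)$ from Lemma \ref{volEst} as the relevant density, then checking that the hypothesis ``$\Psi_{r^{-1}(s)}$ contained in $B_1(p)$'' in that lemma is supplied here by restricting attention to flow lines that stay in $B_l(U)$ over the time interval in question (which is automatic since we only integrate $t\in[0,l]$ and the inner integrand is supported there). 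The geometric content — that geodesics emanating from $U$ and running for time at most $l$ sweep out $B_l(U)$ with bounded multiplicity-weighted density — is exactly Colding's argument, so once the bookkeeping is in place the estimate follows; I would cite \cite{Co1} for the technical details of this Fubini-type manipulation.
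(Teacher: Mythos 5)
The paper offers no proof of Lemma \ref{flowintegral}; it only points to \cite{Co1}. But your proposal misses the mechanism that makes the stated constant work, and the route you sketch would not actually deliver the inequality as written. The key fact is Liouville's theorem: for each fixed $t$, the geodesic flow $\Psi_t\colon SM\to SM$ is a diffeomorphism preserving the Liouville measure $d(x,v)$. Combined with the elementary inclusion $\Psi_t(SU)\subseteq S(B_l(U))$ for $0\le t\le l$ (a unit-speed geodesic starting in $U$ stays within distance $l$ of $U$ for time $\le l$), one gets, for each fixed $t$,
\[
\int_{SU}P(\Psi_t(x,v))\,d(x,v)=\int_{\Psi_t(SU)}P\,d(y,w)\le\int_{S(B_l(U))}P\,d(y,w)\le\omega_{n-1}\int_{B_l(U)}\sup_{|w|_y=1}P(y,w)\,dy,
\]
with $\omega_{n-1}=\vol(S^{n-1})$. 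Integrating over $t\in[0,l]$ and dividing by $l\,\vol(SU)=l\,\omega_{n-1}\vol(U)$ gives the bound $<\e$ exactly. No curvature hypothesis, no Jacobian control, and in particular no appeal to Lemma \ref{volEst} enters.

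Your plan instead pushes forward by $(x,v,t)\mapsto\exp_x(tv)$ onto the base and tries to bound the resulting density by the distortion of Lemma \ref{volEst}. Two concrete problems. First, the density in that coarea computation is the factor for a map from a $2n$-dimensional domain to an $n$-dimensional target and is not the same object as $\det(d(\Psi_{r^{-1}(t)})_v)$ estimated in Lemma \ref{volEst}; you never set up the identification, and the injectivity and multiplicity issues you flag are real complications of your route, not of the lemma. Second, and more tellingly, your approach would at best give $<C(\e_0,\mathcal D,n)\,\e$, and you explicitly acknowledge this and propose to absorb the constant into the notation. That the constant is not $1$ is a symptom that you are not using the intended mechanism: the geodesic flow on $SM$ is \emph{measure-preserving}, so no distortion factor appears at all. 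Lemma \ref{volEst} is the tool for the segment inequality (Lemma \ref{integral}), where one integrates over pairs of endpoints in the base; Lemma \ref{flowintegral} is a different, strictly easier estimate over the unit tangent bundle, and conflating the two leads you to a longer and weaker argument.
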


Similar fact holds for the Hamiltonian flow $\Psi_t^c$ of the Hamiltonian $H(x,v)=|v|_x-g(x)^{\frac{n-1}{n-2}}$ restricted to the set $S^cU=\{(x,v)\in TU|H(x,v)=0\}$. 

\begin{lem}\label{flowcintegral}
Assume that $P$ is a non-negative measurable function on the unit tangent bundle $SM$ and $\frac{1}{\vol(U)}\int_{B_l(U)}\sup_{|v|_x=1}P(x,v)\,dx<\e$, where $U$ is a subset of $M$ and $B_l(U)$ is a neighborhood of $U$ of radius $l$. Then, there is a constant $C>0$ such that 
\[
\begin{split}
	&\frac{1}{l\,\vol(S^cU)}\int_{S^cU}\int_0^{l}P(\Psi_t^c(x,v))\,dt\,d(x,v)<C\e.
\end{split}
\]
\end{lem}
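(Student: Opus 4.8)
The plan is to reduce Lemma~\ref{flowcintegral} to Lemma~\ref{flowintegral} by comparing the Hamiltonian flow $\Psi_t^c$ with the geodesic flow $\Psi_t$ of the underlying Riemannian metric. Recall that minimizers of (\ref{cost}) have the form $\Psi_{r^{-1}(t)}(v)$ with $r(t)=\int_0^t V(\Psi_s(v))\,ds$; equivalently, the trajectories of $\Psi_t^c$ are, up to the time reparametrization $t\mapsto r^{-1}(t)$, precisely the trajectories of the Riemannian geodesic flow restricted to the level set $|v|_x=V(x)$. Since $V=g^{\frac{n-1}{n-2}}$ and $g$ is pinched between positive constants on the relevant region (by the Harnack inequality for $g$, as used in Lemma~\ref{volEst}), the reparametrization function $r$ satisfies $c_1 \le r'(s) \le c_2$ for constants $c_1,c_2>0$ depending only on the allowed data. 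Thus the image of a $\Psi^c$-trajectory of parameter-length $l$ is contained in the image of a $\Psi$-trajectory of length at most $l/c_1$, and conversely.

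First I would set up the change of variables carefully. Fix $(x,v)\in S^cU$, so $|v|_x = V(x)$; write $\tilde v = v/|v|_x \in S_xM$. Then $\Psi_t^c(x,v)$ projects to the same curve in $M$ as $\Psi_{r^{-1}(t)}(x,\tilde v)$ after rescaling, so for a non-negative $P$ on $SM$ (extended to all of $TM$ by $0$-homogeneity or simply evaluated along the rescaled direction) one has
\[
\int_0^l P(\Psi_t^c(x,v))\,dt = \int_0^{r^{-1}(l)} P(\Psi_\sigma(x,\tilde v))\, r'(\sigma)\,d\sigma \le c_2 \int_0^{l/c_1} P(\Psi_\sigma(x,\tilde v))\,d\sigma,
\]
using $r^{-1}(l)\le l/c_1$ and $r'\le c_2$. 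Next I would integrate this over $S^cU$ with respect to the natural measure $d(x,v)$. The map $S^cU\to SU$, $(x,v)\mapsto (x,\tilde v)$, is a fiberwise diffeomorphism (it is just radial projection on each fiber) whose Jacobian is controlled above and below by powers of $V$, hence by constants. Therefore
\[
\frac{1}{l\,\vol(S^cU)}\int_{S^cU}\int_0^l P(\Psi_t^c(x,v))\,dt\,d(x,v) \le C' \cdot \frac{1}{(l/c_1)\,\vol(S U)}\int_{SU}\int_0^{l/c_1} P(\Psi_\sigma(x,w))\,d\sigma\,d(x,w),
\]
and then I would apply Lemma~\ref{flowintegral} with $l$ replaced by $l/c_1$ — noting that $B_{l/c_1}(U)$ is still a bounded neighborhood on which the hypothesis $\frac{1}{\vol(U)}\int_{B_l(U)}\sup_{|v|_x=1}P < \e$ must be assumed (or, more precisely, the statement should be read with $l$ large enough that $B_l(U)$ already contains $B_{l/c_1}(U)$, which holds since $c_1\le 1$ when $g\le K$ is normalized appropriately, or else one simply absorbs the constant). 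The conclusion $< C\e$ then follows with $C$ depending on $c_1,c_2$, the Jacobian bounds, and hence only on $n,\lambda,K$.

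The main obstacle I expect is bookkeeping of the reparametrization and of the domain of integration rather than any deep estimate: one must check that the time-change $r$ is genuinely bi-Lipschitz with constants independent of the base point and the trajectory (this uses that $g$ stays bounded away from $0$ and $\infty$ on a neighborhood of $U$, which is where the Harnack inequality enters and where $B_l(U)$ being a \emph{fixed} bounded neighborhood matters), and that the flow $\Psi_t^c$ is complete long enough — i.e.\ trajectories of parameter-length $l$ through $S^cU$ exist and stay in the region where the comparison is valid. A secondary subtlety is that $P$ is given on the \emph{unit} tangent bundle $SM$, so along a $\Psi^c$-trajectory (whose velocity has norm $V\ne 1$) one should evaluate $P$ at the normalized direction; since the projected curves coincide as unparametrized curves, this is exactly the direction $\Psi_\sigma(x,\tilde v)$ hits, so no information is lost. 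Once these points are handled, the inequality is immediate from Lemma~\ref{flowintegral}.
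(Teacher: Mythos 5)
There is a genuine gap: your reduction rests on the claim that the trajectories of $\Psi_t^c$ are, up to time reparametrization, trajectories of the Riemannian geodesic flow, and this is false. By Theorem~\ref{Ham}, the projected $\Psi_t^c$-trajectories satisfy
\[
\frac{D^2}{dt^2}\gamma=\nabla\log V(\gamma)-\left<\nabla\log V,\dot\gamma\right>\dot\gamma,
\]
which is not the geodesic equation $\frac{D^2}{dt^2}\gamma=0$ unless $V$ is constant. The confusion is understandable because the paper overloads the symbol $\Psi_t$: the map $\Psi_t$ introduced after Theorem~\ref{Ham} is the \emph{mechanical} flow with $\frac{D^2}{dt^2}\Psi_t=\tfrac12\nabla V^2$, and the sentence ``minimizers of~(\ref{cost}) are of form $\Psi_{r^{-1}(t)}(v)$'' refers to that flow; but in Lemma~\ref{flowintegral} the paper explicitly redefines $\Psi_t$ as the Riemannian geodesic flow on $SM$. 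Your time change $r$ relates $\Psi_t^c$ to the mechanical flow, not to the geodesic flow, so the identity $\int_0^l P(\Psi^c_t(x,v))\,dt=\int_0^{r^{-1}(l)}P(\Psi_\sigma(x,\tilde v))\,r'(\sigma)\,d\sigma$ does not hold when $\Psi_\sigma$ on the right is the geodesic flow: the two curves in $M$ are different. Equivalently, $\Psi_t^c$-trajectories are geodesics of the conformal metric $V^2|\cdot|^2$, not of $|\cdot|^2$, and Lemma~\ref{flowintegral} is stated for the latter.

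The fiberwise comparison of $\vol(S^cU)$ with $\vol(SU)$ via the Harnack-type bounds on $g$ is sound and is indeed part of what is needed, but it cannot carry the argument once the trajectory matching fails. The intended route (and what ``Similar fact holds'' signals) is to rerun the Colding-type argument from \cite{Co1} directly for $\Psi_t^c$: since $H(x,v)=|v|_x-V(x)$ generates a Hamiltonian flow, $\Psi_t^c$ preserves the natural (coarea/Liouville) measure on the regular level set $S^cU$, and since $|\dot\gamma|\equiv 1$ along its trajectories, $\Psi_t^c(S^cU)$ projects into $B_l(U)$ for $0\le t\le l$. Fubini and measure invariance then give
\[
\int_{S^cU}\int_0^{l}P\bigl(\Psi_t^c(x,v)\bigr)\,dt\,d(x,v)
=\int_0^l\int_{\Psi_t^c(S^cU)}P
\le l\int_{S^cB_l(U)}P,
\]
after which one compares $\vol(S^c_x)=V(x)^{n-1}\vol(S^{n-1})$ to $\vol(S_x)=\vol(S^{n-1})$ using the two-sided bounds on $V=g^{(n-1)/(n-2)}$, exactly as in your last step. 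That yields $C\e$ with $C$ depending on $n,\lambda,K$. If you insist on reducing to Lemma~\ref{flowintegral}, you would have to apply it for the conformal metric $V^2|\cdot|^2$, which changes $SM$, the volume element, and $B_l(U)$, and so is not the painless reduction your write-up suggests.
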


The rest of this section is devoted to the proofs.

\begin{proof}[Proof of Lemma \ref{integralAngle}]
Since $c(p,y)\leq K^{\frac{n-1}{n-2}}d(p,y)$, $B_r(p)$ is contained in $B^c_{K^{\frac{n-1}{n-2}}r}(p)$. On the other hand, by the Harnack inequality, $g^{\frac{n-1}{n-2}}\geq C(\lambda,n,\e)K^{\frac{n-1}{n-2}}$ on $B_{\frac{1}{2}}(p)$. It follows that $B_r^c(p)$ is contained in $B_{\frac{r}{CK^{\frac{n-1}{n-2}}}}(p)$. 

Let $\gamma_{y_0,y_1}$ and $\bar\gamma_{y_0,y_1}$ denotes the minimizer and geodesic which connect $y_0$ and $y_1$, respectively, which are well-defined Lebesgue almost everywhere. It follows that 
\[
\begin{split}
	&\mathcal Q(y,z)=\frac{1}{t}\int_0^t-\left<\nabla\mF_i(\bar\gamma_{y,z}(t)),\dot{\bar\gamma}_{y,z}(t)\right>\\
	&+\left<\nabla\mF_i(\bar\gamma_{y,z}(s)),\dot{\bar\gamma}_{y,z}(s)\right>+\int_s^tf'(G^{-1}(b(\bar\gamma_{y,z}(s')))\,ds'\,ds\\
	&=-\frac{1}{t}\int_0^t\int_s^t\left<\nabla^2\mF_i(\bar\gamma_{y,z}(s'))\dot{\bar\gamma}_{y,z}(s'),\dot{\bar\gamma}_{y,z}(s')\right>-f'(G^{-1}(b(\bar\gamma_{y,z}(s'))))\,ds'\,ds\\
	&\leq\int_0^t|\nabla^2\mF_i-f'(G^{-1}(\bar b_i))I|_{\bar\gamma_{y,z}(s)}\,ds. 
	\end{split}
\]

Therefore, by Lemma \ref{integral} and Corollary \ref{HessianEstII}, 

\[
\begin{split}
	&\frac{1}{\vol(B_r^c(p))^2\vol(B_{\frac{1}{4}}(p))}\int_{B_r^c(p)^2}\int_{B_{\frac{1}{4}(p)}}\int_0^{\tau(y_0,y_1)}Q(y,\gamma_{y_0,y_1}(t))\,dt\,dy\,dy_0\,dy_1\\
	&\leq\frac{C_1}{\vol(B_{\frac{1}{4}}(p))^2}\int_{B_{\frac{1}{4}}(p)^2}Q(y,z)\,dy\,dz\\
	&\leq\frac{C_2}{\vol(B_{\frac{1}{2}}(p))}\int_{B_{\frac{1}{2}}(p)}|\nabla^2\mF_i - f'(G^{-1}(\bar b_i))|_{z}\,dz<k(\e).
\end{split}
\]
The assertion follows. 


\end{proof}

\begin{proof}[Proof of Theorem \ref{disEst}]
Recall that $c_i(\e)$ and $k_i(\e)$ are positive continuous functions of $\e$ which $\to 0$ as $\e\to 0$. 
The ball $B_{R_0}(p)$ with $R_0=\frac{r}{K^{\frac{n-1}{n-2}}}$ is contained in the ball $B_r^c(p)$. By Theorem \ref{integralAngle}, Lemma \ref{integral}, Theorem \ref{harApprox}, the volume comparison theorem, and the assumption (\ref{vo}), there are positive $c_i(\e)\to 0$ as $\e\to 0$ ($i=1,2,3$), such that the followings hold: given any three points $x,y,z$ in $B_{R_0}(p)$ there are points $x',y',z'$ in $B_{c_1(\e)}(x),B_{c_2(\e)}(y),B_{c_3(\e)}(z)$, respectively, such that 
\begin{equation}\label{small1}
	\int_0^{\tau(x',z')}|\nabla b_i-\nabla\bar b_i|^2_{\gamma_{x',z'}(t)}dt<k_1(\e),
\end{equation}
\begin{equation}\label{small2}
	\int_0^{d(x',z')}\mathcal Q(y',\bar\gamma_{x',z'}(t))dt<k_2(\e).
\end{equation}
\begin{equation}\label{small3}
	\int_0^{d(x',z')}|\nabla^2\mF_i-f'(G^{-1}(\bar b_i))|_{\bar\gamma_{x',z'}(t)}dt<k_3(\e),
\end{equation}
\begin{equation}\label{small4}
	\int_0^{\tau(x',z')}|\nabla^2\mF_i-f'(G^{-1}(\bar b_i))|_{\gamma_{x',z'}(t)}dt<k_4(\e).
\end{equation}



Let us fix a time $s$ in $[0,\tau(x',z')]$ and let $(y',v')$ be the tangent vector such that $\Psi_l(y',v')=\bar\gamma_{x',z'}(s)$. By Corollary \ref{HessianEstII} and Lemma \ref{flowintegral}, there is a point $(y'',v'')$ in $B_{c_4(\e)}(y',v')$ such that
\[
\begin{split}
&\int_0^{1/4}\left|U''(t)-\frac{\lambda}{n-2}U(t)\right|dt\\
&\leq\int_0^{1/4}|\nabla^2\mF_i - f'(G^{-1}(\bar b_i))I|_{\bar\gamma_{y'',p}(t)}dt<k_4(\e)
\end{split}
\]
where $U(t)=F(G^{-1}(\bar b_i(\Psi_t(y'',v'')))$. Here the ball $B_{c_4(\e)}(y',v')$ is defined by the distance on the unit tangent bundle $SM$ induced by the Riemannian metric on $M$ and its Levi-Civita connection. 

Let $\bar U_{a_1,a_2,T}$ be the solution of the equation 
\begin{equation}\label{U}
\bar U''(t)=\frac{\lambda}{n-2}\bar U(t)
\end{equation} with boundary conditions $\bar U(0)=a_1$ and $\bar U(T)=a_2$. 



Since the sectional curvature is bounded and the two points $(y',v')$ and $(y'',v'')$ are $c_4(\e)$-close, $\Psi_t(y',v')$ and $\Psi_t(y'',v'')$ are $k_{8}(\e)$-close. It follows from an argument using Gronwall's inequality that $|U(t)- \bar U_{U(0),U(l),l}(t)|<k_5(\e)$ and $|U'(t)- \bar U'_{U(0),U(l),l}(t)|<k_6(\e)$. It also follows that 
\begin{equation}\label{re1}
|\bar U_{U(0),U(l),l}(t)-F(G^{-1}(\bar b_i(\bar\gamma_{y,\bar\gamma_{x',z'}(s)}(t))))|<k_{7}(\e). 
\end{equation}

By the same argument applied to (\ref{small3}), we also have 
\begin{equation}\label{re2}
|\bar U_{\mF_i(x'),\mF_i(z'),\tau(x',z')}(t)-\mF_i(\gamma_{x',z'}(t))|<k_{8}(\e)
\end{equation}
and 
\begin{equation}\label{re3}
|\bar U'_{\mF_i(x'),\mF_i(z'),\tau(x',z')}(t)-\left<\nabla\mF_i(\gamma_{x',z'}(t)),\dot{\gamma}_{x',z'}(t))\right>|<k_{9}(\e). 
\end{equation}

By Theorem \ref{uEst}, 

\[
\begin{split}
	&\frac{d}{dt}\left(G^{-1}(b_i(\gamma_{x,z}(t)))-t\right)\\
	&=\frac{|\nabla b_i|_{\gamma_{x,z}(t)}}{G'(G^{-1}(b_i(\gamma_{x,z}(t))))}-1\\
	&<k_3(\e). 
\end{split}
\]
It follows from this and Theorem \ref{harApprox} that 
\begin{equation}\label{GLin1}
\begin{split}
	&|G^{-1}(\bar b_i(\gamma_{x,z}(t)))-G^{-1}(\bar b_i(x))-t|<k_{10}(\e)t
\end{split}
\end{equation}
and so
\begin{equation}\label{GLin2}
\begin{split}
	&|\mF_i(\gamma_{x,z}(t))-F(G^{-1}(\bar b_i(x))+t)|<k_{11}(\e)t. 
\end{split}
\end{equation}

Since $F(G^{-1}(\bar b_i(x))+t)$ is a solution of (\ref{U}) and, by (\ref{GLin2}), the boundary values are close to $\mF_i(x')$ and $\mF_i(z')$, it follows from (\ref{re2}) and (\ref{re3}) that 
\begin{equation}\label{re4}
|F(G^{-1}(\bar b_i(x'))+t)-\mF_i(\gamma_{x',z'}(t))|<k_{12}(\e),
\end{equation}
and 
\begin{equation}\label{re5}
|f(G^{-1}(\bar b_i(x'))+t)-\left<\nabla\mF_i(\gamma_{x',z'}(t)),\dot{\gamma}_{x',z'}(t)\right>|<k_{13}(\e). 
\end{equation}

By combining (\ref{re4}) and (\ref{re5}), 
\[
|f(G^{-1}(\bar b_i(\gamma_{x',z'}(t))))-\left<\nabla\mF_i(\gamma_{x',z'}(t)),\dot{\gamma}_{x',z'}(t)\right>|<k_{14}(\e). 
\]

It follows from this and (\ref{uEst}) that 

\[
	||\nabla b_i|_{\gamma_{x',z'}(t)}-\left<\nabla\bar b_i(\gamma_{x',z'}(t)),\dot{\gamma}_{x',z'}(t)\right>|<k_{15}(\e). 
\]

Therefore, by (\ref{small1}), 
\[
	\int_0^{\tau(x',z')}||\nabla \bar b_i|_{\gamma_{x',z'}(t)}-\left<\nabla\bar b_i(\gamma_{x',z'}(t)),\dot{\gamma}_{x',z'}(t)\right>|dt<k_{16}(\e). 
\]

It follows that 
\begin{equation}\label{re6}
	\int_0^{\tau(x',z')}\left|\dot\gamma_{x',z'}(t)-\frac{\nabla\bar b_i(\gamma_{x',z'}(t))}{|\nabla \bar b_i|_{\gamma_{x',z'}(t)}}\right|dt<k_{17}(\e). 
\end{equation}




Let 
\[
\begin{split}
	\mathcal G(t,r_1,r_2,l)&=\frac{F(r_1+t)-F(r_2)}{l\,f^n(r_1+t)}\\
	&+\frac{1}{l\,f^n(r_1+t)}\int_0^l\int_s^lf'(F^{-1}(\bar U(s'))\,ds'\,ds.
\end{split}
\]
Here the dependencies of $\bar U$ on $r_1$ and $\tau(x,z)$ are suppressed. 


Assume that the distance from $\gamma$ to $y$ is greater than $c_5(\e)$. It follows from (\ref{small2}), (\ref{re1}), (\ref{re4}), and (\ref{re6}) that if $c_5$ is appropriately chosen, then 
\[
\begin{split}
	&\int_0^{\tau(x',z')}\Big|L'(t)+\mathcal G(t,G^{-1}(\bar b_i(x')),G^{-1}(\bar b_i(y')),L(t))\Big|dt<k_{18}(\e).
\end{split}
\]
Let $L_m$ be the corresponding quantity in the model. It follows that $L_m'(t)+\mathcal G(t,G^{-1}(\bar b_i(x')),G^{-1}(\bar b_i(y')),L_m(t))=0$ and 


\[
\begin{split}
	&|L(t)-L_m(t)|\leq\int_0^{t}|L'(s)-L_m'(s)|ds\\
	&\leq\int_0^{t}|L'(s)-\mathcal G(s,L(s))+\mathcal G(s,L(s))-\mathcal G(s,L_m(s))|ds\\
	&\leq k_{11}(\e)+C\int_0^{t}|L(s)-L_m(s)|ds 
\end{split}
\]
It follows from Gronwall's inequality that $|L(t)-L_m(t)|<k_{12}(\e)$. 

Suppose the distance from $\gamma$ to $y$ is less than $c_5(\e)$. In this case, it is enough to show that $|\tau(x,z)-d(x,z)|<k_{13}(\e)$. Since $\tau(x,z)$ is the length of the minimizer $\gamma_{x,z}$, $\tau(x,z)\geq d(x,z)$. 




It also follows from Theorem \ref{uEst}, (\ref{GLin2}), and (\ref{re4}) that 
\[
\begin{split}
&\int_0^{\tau(x',z')}f(G^{-1}(\bar b_i(\gamma_{x',z'}(t))))^{2-n}dt \\
	&=c(x',z')\leq\int_0^{d(x',z')}g(\bar\gamma_{x',z'}(t))dt\\
	&\leq k(\e)+\int_0^{d(x',z')}f(G^{-1}(\bar b_i(\bar\gamma_{x',z'}(t))))^{2-n}dt\\
	&\leq \tilde k(\e)+\int_0^{d(x',z')}f(G^{-1}(\bar b_i(\gamma_{x',z'}(t))))^{2-n}dt 
\end{split}
\]

It follows that $|d(x,z)-\tau(x,z)|<k(\e)$ as claim.

\end{proof}

\smallskip

\section{Proof of Theorem \ref{main}}\label{PO}

In this section, we finish the proof of Theorem \ref{main}. By scaling, it is enough to consider the case when $R=1$. Let $R_0>0$ be a small enough constant such that all the estimates in the previous sections hold on $B_{R_0}(p)$. 

By the third assertion of Theorem \ref{harApprox}, we can find a subset $W=\{x_1,\ldots,x_N\}$ in $B_{R_0}(p)$ which is $c_1(\e)$-dense. Suppose that $b_1(x_i)>0$. Let $y_i$ be the point in $b_1^{-1}(0)$ such that $b_1(x_i)-b_1(y_i)=c(x_i,y_i)$. Let $x_{i,m}$ be the point in the model $\mathbb{R}\times_fb_1^{-1}(0)$ such that $b_m(x_i)-b_m(y_i)=c(x_i,y_i)$ ($b_1=-b_0$ in the case of the model and it is denoted by $b_m$). If $b_1(x_1)<0$ instead, then one can move $x_i$ along the flow of $-\nabla b_0$ (recall that $b_0$ is differentiable at $x_i$). It follows that there is a point $y_i$ in $b_1^{-1}(0)$ such that $b_0(x_i)-b_0(y_i)=c(x_i,y_i)$. 

It remains to show that $\{x_{1,m},\ldots,x_{N,m}\}$ is $c_2(\e)$-dense in $B_{R_0,m}(p)$. It follows from this, Theorem \ref{disEst}, and \cite[10.1.1]{Pe} that $B_{R_0}(p)$ and $B_{R_0,m}(p)$ are $c_3(\e)$-close in the Gromov-Hausdorff distance. 

Let $x_m=(b_m(x_m), y_m)$ be a point in $B_{R_0,m}(p)$ of the warped product model. Assume that $b_m(x_m)>0$. By assumption $b_1(y_m)=0$. Let $y$ be a point in $W$ which is $c_1(\e)$-close to $y_m$ in $M$. Let $x$ be the point in $M$ which satisfies $b_0(y)-b_0(x)=b_m(x_m)=c(x,y)$. Let $x_i$ be a point in $W$ which is $c_1(\e)$-close to $x$. By applying Theorem \ref{disEst} twice, it follows that $x_{i,m}$ is $c_2(\e)$-close to $x_m$ as claimed. Similar procedure works if $b_m(x_m)<0$. 



\smallskip


\begin{thebibliography}{100}
\bibitem{AbGr} U. Abresch, D. Gromoll: On complete manifolds with nonnegative Ricci curvature. J. Amer. Math. Soc. 3 (1990), no. 2, 355-374. 
\bibitem{CaSi} P. Cannarsa, C. Sinestrari: Semiconcave functions, Hamilton-Jacobi equations, and optimal control. Progress in Nonlinear Differential Equations and their Applications, 58. Birkh\"auser Boston, Inc., Boston, MA, 2004.
\bibitem{ChCo} J. Cheeger, T.H. Colding: Lower bounds on Ricci curvature and the almost rigidity of warped products. Ann. of Math. (2) 144 (1996), no. 1, 189-237.
\bibitem{ChGr} J. Cheeger, D. Gromoll: The splitting theorem for manifolds of nonnegative Ricci curvature. J. Differential Geometry 6 (1971/72), 119-128. 
\bibitem{Co1}  T.H. Colding: Shape of manifolds with positive Ricci curvature. Invent. Math. 124 (1996), no. 1-3, 175-191.
\bibitem{EvGa} L.C. Evans, R.F. Gariepy: Measure theory and fine properties of functions. Revised edition. Textbooks in Mathematics. CRC Press, Boca Raton, FL, 2015.
\bibitem{GiTr}  D. Gilbarg, N.S. Trudinger: Elliptic partial differential equations of second order. Reprint of the 1998 edition. Classics in Mathematics. Springer-Verlag, Berlin, 2001.
\bibitem{Ju} V. Jurdjevic: Geometric control theory. Cambridge Studies in Advanced Mathematics, 52. Cambridge University Press, Cambridge, 1997.
\bibitem{Le} P.W.Y. Lee: A warped product splitting theorem through weak KAM theory. arxiv:1712.08896
\bibitem{Li} P. Li: Geometric analysis. Cambridge Studies in Advanced Mathematics, 134. Cambridge University Press, Cambridge, 2012.
\bibitem{LiWa1} P. Li, J. Wang: Complete manifolds with positive spectrum. J. Differential Geom. 58 (2001), no. 3, 501-534.
\bibitem{LiWa2} P. Li, J. Wang: Complete manifolds with positive spectrum. II. J. Differential Geom. 62 (2002), no. 1, 143-162.
\bibitem{Lions} P.L. Lions: Generalized solutions of Hamilton-Jacobi equations. Research Notes in Mathematics, 69. Pitman (Advanced Publishing Program), Boston, Mass.-London, 1982. 
\bibitem{On} B. O'Neil: The fundamental equations of a submersion. Michigan Math. J. 13 (1966), 459-469.
\bibitem{Pe} P. Petersen: Riemannian geometry. Third edition. Graduate Texts in Mathematics, 171. Springer, Cham, 2016.
\bibitem{Vi} C. Villani: Optimal transport. Old and new. Grundlehren der Mathematischen Wissenschaften [Fundamental Principles of Mathematical Sciences], 338. Springer-Verlag, Berlin, 2009.
\end{thebibliography}
\end{document}